\newtheorem{lemma}{Lemma}
\newtheorem{theorem}{Theorem}
\newtheorem{remark}{Remark}
\newtheorem{assumption}{Assumption}
\newtheorem{corollary}{Corollary}
\begin{document}

\title{Global Optimization with A Power-Transformed Objective and Gaussian Smoothing}

\author[1]{Chen Xu}
\affil[1]{Department of Engineering, Shenzhen MSU-BIT University, China. 
\authorcr Email: xuchen@smbu.edu.cn
}

\date{}

\maketitle

\begin{abstract}
We propose a novel method that solves global optimization problems in two steps: (1) perform a (exponential) power-$N$ transformation to the not-necessarily differentiable objective function $f$ and get $f_N$, and (2) optimize the Gaussian-smoothed $f_N$ with stochastic approximations. Under mild conditions on $f$, for any $\delta>0$, we prove that with a sufficiently large power $N_\delta$, this method converges to a solution in the $\delta$-neighborhood of $f$'s global optimum point. The convergence rate is $O(d^2\sigma^4\varepsilon^{-2})$, which is faster than both the standard and single-loop homotopy methods if $\sigma$ is pre-selected to be in $(0,1)$. In most of the experiments performed, our method produces better solutions than other algorithms that also apply smoothing techniques.
\end{abstract}

\begin{keywords}
Gaussion Smoothing, Homotopy for Optimizations, Adversarial Image Attacks.
\end{keywords}

\section{Introduction}
In this work, we consider the global optimization problem of
\begin{equation}
\label{objective}
\max_{\bm{x}\in\mathcal{S}\subset \mathbb{R}^d} f(\bm{x}),
\end{equation}
where $f(\bm{x})$ is a continuous and possibly non-concave function with a global maximum $f(\bm{x}^*)>\sup_{\bm{x}\neq \bm{x}^*}f(\bm{x})$, and $d$ is a positive integer. The minimize-version of this problem is often encountered in machine learning, such as model training and adversarial attack in computer vision. The gradient-based algorithms, such as the (stochastic) gradient descent, are commonly used, which only guarantee to approximate a locally optimal solution in a general case.

Homotopy, also called graduated continuation, is a class of methods for finding a global solution to (\ref{objective}), with many successful applications in machine learning (e.g.,\cite{Xu-NIPS2016, Iwakiri2022}). It converts the original problem to
\begin{equation}
\label{homotopy-obj}
\max_{\bm{\mu}\in\mathbb{R}^d,\sigma\geq0} \mathbb{E}_{\bm{\xi}}[f(\bm{\mu}+\sigma\bm{\xi})],
\end{equation}
where $\sigma\geq 0$ is called the scaling coefficient and $\xi$ is a random variable with a pre-selected distribution, such as a standard multivariate Gaussian distribution (Gaussian Homotopy, GH) or a uniform distribution in a unit sphere. Based on the observation that $\bm{\mu}^*_{\sigma}:=\arg\max_{\bm{\mu}} \mathbb{E}[f(\bm{\mu}+\sigma\xi)]$ approaches\footnote{Note that $\mathbb{E}[f(\bm{\mu}+\sigma\xi)]=f(\bm{\mu})$ if $\sigma=0$.} $\bm{x}^*$ as $\sigma$ decreases to 0, the homotopy methods admits a double-loop mechanism: the outer loop iteratively decreases $\sigma$, and for each fixed value of $\sigma$, the inner loop solves $\max_{\bm{\mu}} \mathbb{E}[f(\bm{\mu}+\sigma\xi)]$, with the solution found in the current inner loop as the starting search point in the next inner loop.

The double-loop mechanism of the homotopy method is costly in time. To tackle this issue, \cite{Iwakiri2022} propose a single-loop Gaussian homotopy (SLGH) method that iteratively performs one-step update of $\bm{\mu}$ and $\sigma$, which reduces the convergence rate from $O(d^2/\epsilon^4)$ to $O(d/\epsilon^2)$. However, in theory, SLGH only guarantees to approximate a local optimum\footnote{Theorem 4.1 in \cite{Iwakiri2022} shows that SLGH approximates a solution $\hat{\bm{x}}$ such that $\mathbb{E}[\nabla f(\hat{\bm{x}})]=0$.}, which is not necessarily a global one. A time-efficient algorithm that aims at the global maximum is still to be found. 

Therefore, in this work, we propose a new method, namely the Gaussian Smoothing with a Power-transformed Objective (GSPTO),  for solving the optimization problem of (\ref{objective}). According to our Corollary \ref{summary}, GSPTO converges to a neighborhood of $\bm{x}^*$ with the rate of $O(d^2\sigma^4/\epsilon^{2})$. It indicates that GSPTO is faster than the standard homotopy and SLGH, if $\sigma>0$ is pre-selected to be in $(0,1)$. Most of the experiments in Section \ref{experiments} show that the GSPTO-based algorithm (e.g., EPGS, introduced later) outperforms other algorithms that also apply the smoothing technique.


\subsubsection*{Motivation}
Under the condition of $\int_{\mathbb{R}^d} f(\bm{x}) d\bm{x}>0$ and an additional one, there is a threshold $\sigma_m>0$ such that whenever $\sigma>\sigma_m$, the Gaussian-smoothed objective $E_{\bm{\xi}\sim\mathcal{N}(\bm{0},I_d)}[f(\bm{\mu}+\sigma \bm{\xi})]$ is concave in $\bm{\mu}$ (see \cite[Main Result (Corollary 9)]{Mobahi2012}). Hence, Gaussian smoothing converts the original possibly non-concave maximization problem to a concave one, if the maximum point $\bm{\mu}^*$ coincides with $\bm{x}^*$. Although this condition is not true in general\footnote{This is why smoothing alone is insufficient for optimization, and is typically used in conjunction with iteratively reducing the scaling parameter, which becomes the homotopy algorithm.}, we can modify the objective to make $\bm{\mu}^*$ close to $\bm{x}^*$, where recall that $\bm{x}^*$ denotes the global maximum point of the original objective $f$ before modification.

Intuitively, if we modify $f(\bm{x})$ to put sufficiently large weight on its global maximum $\bm{x}^*$, the global maximum $\bm{\mu}^*:=\arg\max_{\bm{\mu}} F(\bm{\mu},\sigma)$ should get close enough to $\bm{x}^*$. One way of such modification is by taking powers of $f$, if $f(\bm{x}^*)>1$. The difference $f^N(\bm{x}^*)-f^N(\bm{x})$ is positively related with the power $N$, which indicates that more weight is put on $\bm{x}^*$ as $N$ increases. Figure 1(a) verifies this intuition with an example, and Figure 1(b) illustrates the effects of taking exponential powers. As shown in these two toy examples, $\bm{\mu}^*$ approaches $\bm{x}^*$ as $N$ increases.

\begin{figure}[h]
\begin{tabular}{cc}
	\centering
        \subfloat[Gaussian Smoothing of $f^N$.]{\includegraphics[scale=0.55]{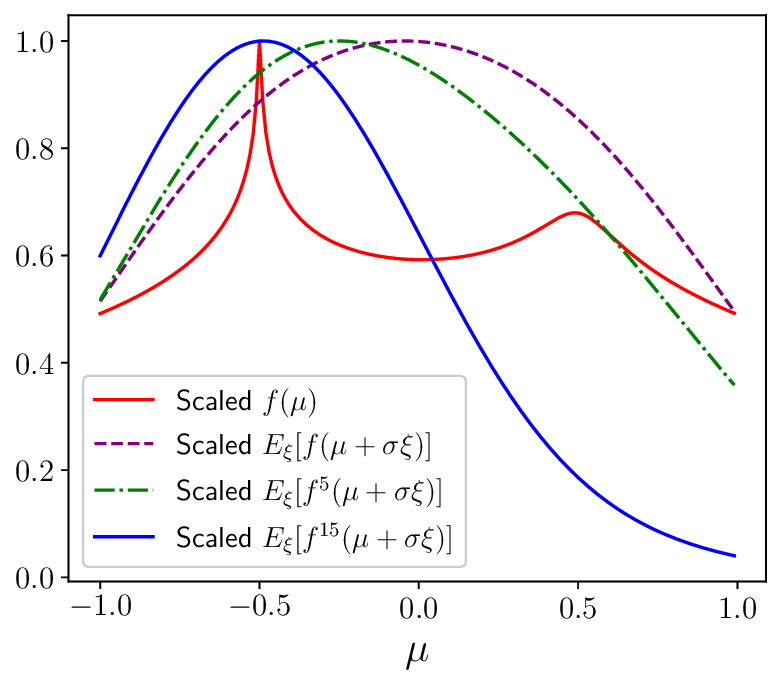} }
        &
        \subfloat[Gaussian Smoothing of $\exp(Nf)$.]{\includegraphics[scale=0.55]{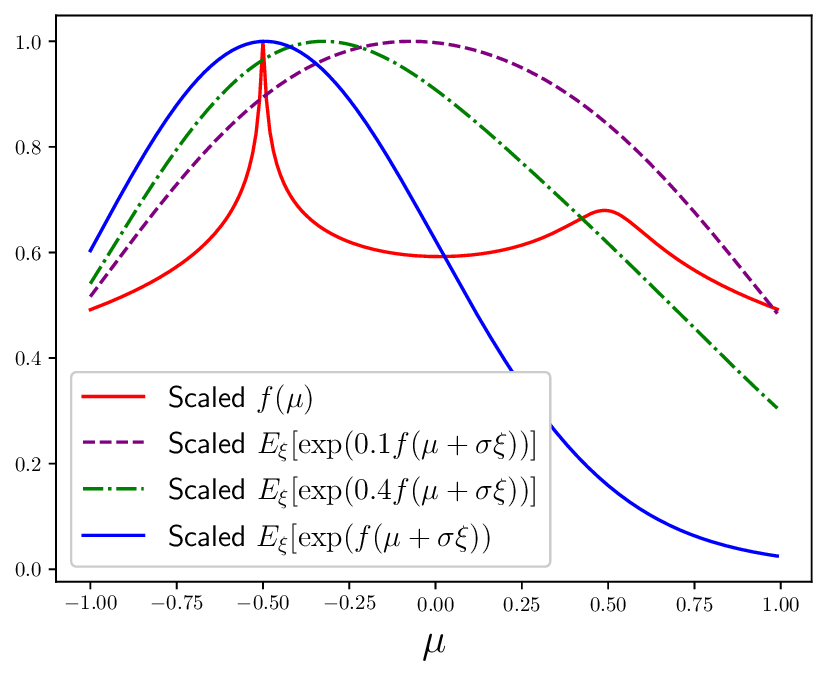} }
\end{tabular}
       	\caption{Effects of elevating the objective $f$ before Gaussian smoothing (A toy example): The maximum point of $F_N(\mu):=\mathbb{E}_{\xi\sim\mathcal{N}(0,1)}[f_N(\mu+\sigma\xi)]$ gets closer to the global maximum point $f$ as $N$ increases, where $\sigma=0.5$ and $f(\mu)=-\log((\mu+0.5)^2+10^{-5})-\log((\mu+0.5)^2+10^{-2})+10$ for $|\mu|\leq 1$ and $f(\mu)=0$ for $|\mu|>1.$ For easier comparison, the graph of each function is scaled to have a maximum value of 1.} 
\label{escape-local2D}
\end{figure}

From the above intuition, we propose GSPTO for solving the global optimization problem (\ref{objective}), which is a new method that places more weight on the objective $f$'s maximum value (by increasing the gap between the global and local maximum values) before performing Gaussian smoothing. Based on GSPTO, we design two algorithms\footnote{See Algorithm \ref{alg1} for the two algorithms.}, Power Gaussian Smoothing (PGS) and Exponential Power Gaussian Smoothing (EPGS), which are featured with replacing the original objective $f(\bm{x})$ with a (exponential) power transformation. Specifically, with $\sigma$ and $N$ as two hyper-parameters, PGS solves $\max_{\bm{\mu}} \mathbb{E}_{\bm{x}\sim \mathcal{N}(\bm{\mu},\sigma^2 I_d)}[f^N(X)]$ and EPGS solves $\max_{\bm{\mu}} \mathbb{E}_{\bm{x}\sim \mathcal{N}(\bm{\mu},\sigma^2 I_d)}[e^{Nf(\bm{x})}]$, both using a stochastic gradient ascent algorithm derived in this paper, which does not require the differentiability of $f$. Here, $\mathcal{N}$ denotes a multivariate Gaussian distribution and $I_d$ denotes an identity matrix of dimension $d\times d$.

\subsubsection*{Related Work}
The homotopy methods, firstly proposed in \cite[Chapter 7]{BlakeZisserman1987}, are intensively studied in the field of machine learning for global optimization problems. \cite{MobahiFisher2015} derives a bound for the worst scenario of the GH algorithm in a deterministic setting (i.e., the expectation $\mathbb{E}$ is not approximated with samples), while \cite{Hazan2016} provides a convergence analysis in a stochastic setting (i.e., $\mathbb{E}$ is estimated with samples). Specifically, the latter proves that with a probability greater than $1-p$, the solution $\hat{\bm{x}}$ produced by their proposed homotopy algorithm is $\epsilon$-optimal (i.e., $f(\bm{x}^*)-f(\hat{\bm{x}})<\epsilon$) after $\tilde{O}(d^2/(\sigma^2\epsilon^4)$ steps of solution-update. \cite{GaoSener2022} changes the distribution of the perturbation $\xi$ from the commonly used Gaussian or uniform to the distribution that minimizes the estimation error of the gradient $\nabla_{\bm{\mu}}E_{\bm{\xi}\sim\mathcal{N}(\bm{0},I_d)}[f(\bm{\mu}+\sigma \bm{\xi})]$. \cite{Lin2023} proposes an algorithm for learning the whole solution path produced by the homotopy. Specifically, their algorithm learns a model $x_\phi (\sigma)$ that predicts (for any $\sigma>0$) the solution to $\min_{\bm{\mu}}\nabla_{\bm{\mu}}E_{\bm{\xi}\sim\mathcal{N}(\bm{0},I_d)}[f(\bm{\mu}+\sigma \bm{\xi})]$, where $\phi$ is the set of model parameters to be trained. 

The smoothing and homotopy methods have a large number of successful applications in machine learning, such as neural network training (\cite{Hazan2016}), adversarial attack on image classification models (\cite{Iwakiri2022}), solving $L_1$-regularized least-square problems (\cite{Xiao2012}), neural combinatorial optimization (\cite{GaoSener2022}), improving the optimization algorithms of stochastic gradient descent and Adam (\cite{StarnesWebster2024}), and so on.

There are a few existing studies (\cite{Dvijotham2014,Roulet2020,Chen2024}) that replace the original $f$ with a surrogate objective that also involves the exponential transformation $e^{Nf(\bm{\mu}+\xi)}$ before smoothing. But their works are different from ours. \cite{Dvijotham2014} propose to minimize the surrogate objective of  $G(\bm{\mu}):=\frac{1}{N}\log\left(\mathbb{E}_{\xi\sim\mathcal{N}(0,\Sigma)}[e^{Nf(\bm{\mu}+\xi)}] \right)+\frac{1}{2}\bm{\mu}^T R\bm{\mu}$. The theory\footnote{$\min_{\mu} G(\bm{\mu},\Sigma)$ is a convex problem given that $N R-\Sigma^{-1}$ is positive definite and $\mathcal{C}$ is convex (\cite[Theorem 3.1]{Dvijotham2014}).} that justifies this surrogate objective requires that  $N,R$, and $\Sigma$ be selected so that $N R-\Sigma^{-1}$ is positive semi-definite. This indicates that EPGS, for which $R=\bm{0}$ and $\Sigma = \sigma I_d$, is not a special case of theirs, since $-\sigma^{-1} I_d$ is negative definite and violates their requirement. Moreover, their theory on the distance between the optimum point of the new surrogate and $\bm{x}^*$ is incomplete (see \cite[Section 3.2]{Dvijotham2014}). For optimal-control problems, \cite{Roulet2020} study the surrogate objective that is similar to $G(\bm{\mu})$, and provide a theoretical analysis on the corresponding algorithm's convergence to a stationary point. However, the relation between this stationary point and the global optimum point $\bm{x}^*$ is not revealed. The proposed surrogate objective in \cite{Chen2024} is $(1-N)f(\bm{\mu})+\log\left(\mathbb{E}_{\xi\sim\mathcal{N}(0,\Sigma)}[e^{Nf(\bm{\mu}+\xi)}] \right)$, where $N\in[0,1]$. This is very different from GSPTO's requirement that $N$ is sufficiently large. Also, their theory (\cite[Theorem 10]{Chen2024}) bounds $|f(\bm{x}^*)-f(\bm{\mu}^*)|$ with $O(N\sigma^2)$, which does not imply an improvement for increasing the value of $N$.

 
\subsection*{Contribution}
This paper introduces a novel method, GSPTO, for solving global optimization problems, with the contributions summarized as follows.
\begin{enumerate}
\item To our knowledge, for global optimization problems, this is the first work that proposes the idea\footnote{\cite{Dvijotham2014}, \cite{Roulet2020}, and \cite{Chen2024} have not mentioned this idea.} of putting sufficiently large weight on the global maximum values of the objective, to decrease the distance between the optimum point before and after Gaussian smoothing (i.e., $\|\bm{x}^*-\bm{\mu}^*\|$). PGS and EPGS are two ways of realizing this idea, and future studies are motivated for finding better ones.

\item In theory, GSPTO is faster than the homotopy methods for optimization. According to Corollary \ref{summary}, GSPTO has a convergence rate of $O(d^2\sigma^4\varepsilon^{-2})$, which is faster than the standard homotopy method ($O(d^2\sigma^{-2}\varepsilon^{-4})$,\cite[Theorem 5.1]{Hazan2016}), and SLGH ($O(d^2\varepsilon^{-2})$,\cite[Theorem 4.1]{Iwakiri2022}), if $\sigma$ is pre-selected to be in $(0,1)$. Most of our experiments show that it outperforms other optimization algorithms that also apply the smoothing technique (see Section \ref{ExperimentSummary}). 

\item Our convergence analysis does not require the Lipschitz condition on the original objective $f$, which is assumed in the theoretical analysis of homotopy methods in other studies (\cite{Hazan2016,Iwakiri2022}). Therefore, our analysis applies to more situations.
\item The theory derived in this work is on the distance between the found solution and the optimal one, while the convergence analysis in other studies on homotopy (e.g., \cite{Hazan2016,Iwakiri2022}) is on the objective value of the found solution. Therefore, our theory has a wider range of applications (e.g., for problems that concern the distance between the found solution and the optimal like inverse problems and adversarial attack in image recognition).
\end{enumerate}

\section{Preliminaries}
We rigorously prove the intuition that motivates GSPTO: Given $\sigma>0$, for any $\delta>0$, there exists a threshold such that whenever $N$ exceeds this threshold, the global maximum point of $F_N(\bm{\mu},\sigma)$ lies within a $\delta$-neighborhood of $\bm{x}^*$, where $F_N(\bm{\mu}):=\mathbb{E}_{\bm{x}\sim\mathcal{N}(\bm{\mu},\sigma I_d)}[f_N(\bm{x})]$, and
\begin{equation}
\begin{split}
\label{fN}
&f_N(\bm{x}_k):=
\left\{
\begin{array}{ll}
f^N(\bm{x}_k), &\bm{x}\in\mathcal{S};\\
0, &\text{otherwise},\\
\end{array}
\right. \text{(PGS setting);}\; \\
&f_N(\bm{x}_k):=
\left\{
\begin{array}{ll}
e^{Nf(\bm{x}_k)}, &\bm{x}\in\mathcal{S};\\
0, &\text{otherwise}.\\
\end{array}
\right.
\text{(EPGS setting).}
\end{split}
\end{equation}

\begin{theorem}
\label{Thm1}
Let $f:\mathcal{S}\subset\mathbb{R}^d\rightarrow \mathbb{R}$ be a continuous function that is possibly non-concave (and non-negative only for the case of PGS), where $\mathcal{S}$ is compact. Assume that $f$ has a global maximum $\bm{x}^*$ such that $\sup_{\bm{x}: \lVert \bm{x} - \bm{x}^*\rVert \geq \delta}f(\bm{x})<f(\bm{x}^*)$ for any $\delta>0$. For $\sigma>0$, define 
\begin{equation*}
F_N(\bm{\mu},\sigma) := (\sqrt{2\pi}\sigma)^{-k} \int_{\bm{x}\in \mathbb{R}^k} f_N(\bm{x}) e^{-\frac{\lVert \bm{x} - \bm{\mu} \rVert^2}{2\sigma^2}} d\bm{x}, \quad N\in\{0,1,2,... \},
\end{equation*}
where $f_N$ is defined in (\ref{fN}) for either PGS or EPGS. Then, for any $M>0$ and $\delta>0$ such that $\bar{B}(\bm{x}^*;\delta):=\{\bm{x}\in\mathbb{R}^d: \|\bm{x}-\bm{x}^*\|\leq\delta\}\subset\mathcal{S}$, there exists $N_{\delta,\sigma,M}>0$, such that whenever $N>N_{\delta,\sigma,M}$, we have for any $\|\bm{\mu}\|<M$ that: $\frac{\partial F_{N}(\bm{\mu},\sigma)}{\partial \mu_i}>0$ if $\mu_i<x_i^*-\delta$, and $\frac{\partial F_{N}(\bm{\mu},\sigma)}{\partial \mu_i}<0$ if $\mu_i>x_i^*+\delta$. Here, $\mu_i$ and $x_i^*$ denote the $i^{th}$ entry of $\bm{\mu}$ and $\bm{x}^*$, respectively, where $i\in\{1,2,...,d\}$.
\end{theorem}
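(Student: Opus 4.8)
The plan is to reduce the sign of $\partial F_N/\partial \mu_i$ to the sign of a single scalar — the displacement between $\mu_i$ and a weighted barycenter of the coordinate $x_i$ — and then to show that this barycenter concentrates at $x_i^*$ as $N$ grows, uniformly over the ball $\{\|\bm{\mu}\|<M\}$. First I would differentiate under the integral sign, which is justified because $f_N$ is bounded on the compact set $\mathcal{S}$ and vanishes outside it, while the Gaussian kernel and its $\mu_i$-derivative are bounded for bounded $\bm{\mu}$. Using $\partial_{\mu_i}e^{-\|\bm{x}-\bm{\mu}\|^2/2\sigma^2}=\sigma^{-2}(x_i-\mu_i)e^{-\|\bm{x}-\bm{\mu}\|^2/2\sigma^2}$ and writing $w(\bm{x}):=f_N(\bm{x})e^{-\|\bm{x}-\bm{\mu}\|^2/2\sigma^2}\geq 0$, the derivative factors as
\[
\sigma^2(\sqrt{2\pi}\sigma)^d\,\frac{\partial F_N}{\partial \mu_i}=\int_{\mathcal{S}}(x_i-\mu_i)\,w(\bm{x})\,d\bm{x}=W\,(\bar{x}_i-\mu_i),
\]
where $W:=\int_{\mathcal{S}}w>0$ and $\bar{x}_i:=W^{-1}\int_{\mathcal{S}}x_i\,w$. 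Hence the sign of the partial derivative equals the sign of $\bar{x}_i-\mu_i$, and it suffices to prove $|\bar{x}_i-x_i^*|<\delta$ for large $N$: then $\mu_i<x_i^*-\delta$ forces $\bar{x}_i>x_i^*-\delta>\mu_i$, and symmetrically for $\mu_i>x_i^*+\delta$.

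The core is a Laplace-type concentration estimate. I would fix $\eta=\delta/2$ and split $\mathcal{S}$ into $A_\eta=\bar{B}(\bm{x}^*;\eta)$ and its complement. By hypothesis $f_\eta:=\sup_{\bm{x}\in\mathcal{S},\,\|\bm{x}-\bm{x}^*\|>\eta}f(\bm{x})<f(\bm{x}^*)=:f^*$. Bounding $f_N$ above by $e^{Nf_\eta}$ (resp.\ $f_\eta^N$) on the complement and below by $e^{N(f^*-\epsilon)}$ (resp.\ $(f^*-\epsilon)^N$) on a small ball $B(\bm{x}^*;r)\subseteq A_\eta$ on which continuity gives $f\geq f^*-\epsilon$, and controlling the Gaussian factor by the crude bounds $\int_{\mathcal{S}}e^{-\|\bm{x}-\bm{\mu}\|^2/2\sigma^2}\leq(\sqrt{2\pi}\sigma)^d$ from above and $e^{-\|\bm{x}-\bm{\mu}\|^2/2\sigma^2}\geq e^{-(r+\|\bm{x}^*\|+M)^2/2\sigma^2}$ from below on that ball, I obtain a bound on the mass ratio $W_{\mathrm{out}}/W_{\mathrm{in}}$ of the form $C\rho^N$ with $\rho<1$ (namely $\rho=e^{-(f^*-\epsilon-f_\eta)}$ or $\rho=f_\eta/(f^*-\epsilon)$, after choosing $0<\epsilon<f^*-f_\eta$). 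Crucially, the constants $C$ and $\rho$ depend only on $\sigma,M,r,\eta,\epsilon$ and $\bm{x}^*$, not on $\bm{\mu}$.

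Since $|x_i-x_i^*|\leq\eta$ on $A_\eta$ and $|x_i-x_i^*|$ is bounded by a $\mathrm{diam}(\mathcal{S})$-type constant $B_0$ on the complement, the splitting $\bar{x}_i-x_i^*=W^{-1}\int_{\mathcal{S}}(x_i-x_i^*)w$ yields $|\bar{x}_i-x_i^*|\leq \eta+B_0\,W_{\mathrm{out}}/W_{\mathrm{in}}\leq \eta+B_0C\rho^N$, which is $<\delta$ once $N$ exceeds a threshold that, by the previous paragraph, is independent of $\bm{\mu}$. Taking the maximum of these thresholds over the finitely many coordinates $i=1,\dots,d$ gives the claimed $N_{\delta,\sigma,M}$.

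I expect the main obstacle to be the uniformity in $\bm{\mu}$ rather than the concentration itself: the concentration of $f_N$ at $\bm{x}^*$ is standard Laplace asymptotics, but I must ensure every constant in the ratio bound is independent of $\bm{\mu}$ over the whole ball $\|\bm{\mu}\|<M$. The delicate point is the uniform lower bound on the Gaussian kernel over $B(\bm{x}^*;r)$, which is precisely where boundedness of $\bm{\mu}$ is indispensable — otherwise the Gaussian weight near $\bm{x}^*$ could decay and swamp the gain from raising $f$ to the power $N$. Minor care is also needed to confirm $W>0$ and (for PGS) that $f^*>0$, the latter being automatic from the strict-maximum hypothesis together with $f\geq 0$.
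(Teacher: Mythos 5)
Your proposal is correct, and it takes a genuinely different route from the paper's proof. The paper rescales $F_N$ by $e^{-ND_\delta}$, where $D_\delta$ is the midpoint of $V_\delta:=\sup_{\|\bm{x}-\bm{x}^*\|\geq\delta}f(\bm{x})$ and $f(\bm{x}^*)$, and splits the \emph{gradient integral itself} into the contribution of $B(\bm{x}^*;\delta)$ and of its complement: the tail contribution is bounded above by a multiple of $e^{N(V_\delta-D_\delta)}$ uniformly in $\bm{\mu}$, while---precisely when $|\mu_i-x_i^*|>\delta$, so that $x_i-\mu_i$ keeps a fixed sign over the ball---the inner contribution is bounded below in magnitude by an $N$-independent constant $(\delta-\delta')e^{-M^2/\sigma^2}V(\delta',d,\sigma)$, obtained from a smaller ball $B(\bm{x}^*;\delta')$ on which $f\geq D_\delta$; the sign of $\partial F_N/\partial\mu_i$ is then that of the inner term once $N$ is large. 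You instead prove the exact identity that the sign of $\partial F_N/\partial\mu_i$ equals the sign of $\bar{x}_i-\mu_i$, with $\bar{x}_i$ the barycenter of the weight $w=f_N\,e^{-\|\cdot-\bm{\mu}\|^2/2\sigma^2}$, and reduce everything to the mass-ratio estimate $W_{\mathrm{out}}/W_{\mathrm{in}}\leq C\rho^N$, giving $|\bar{x}_i-x_i^*|\leq\delta/2+B_0C\rho^N<\delta$ for large $N$. The analytic pillars are identical (Laplace-type domination of the tail mass, and the $\bm{\mu}$-uniform Gaussian lower bound near $\bm{x}^*$, which you correctly identify as the step where $\|\bm{\mu}\|<M$ is indispensable), but the decompositions differ: yours compares masses, the paper's compares gradient components. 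Your reduction buys a cleaner logical structure---the location of $\mu_i$ enters only in the final comparison, the sign bookkeeping of the paper's last two chains of inequalities disappears, and one coordinate-independent concentration estimate serves all $i$ at once---and it yields the slightly stronger conclusion that the weighted barycenter itself lies in the $\delta$-box around $\bm{x}^*$; the paper's split at radius $\delta$ produces its threshold $N_{\delta,\sigma,M}$ in closed form as the solution of a single explicit inequality. Your treatment of the side conditions ($W>0$, $f(\bm{x}^*)>0$ in the PGS case, and differentiation under the integral sign) is adequate as sketched.
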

\begin{proof}
See the appendix for the proof for the EPGS setting. The proof for the PGS setting is similar.
\end{proof}

\section{Gaussian Smoothing with Power-Transformed Objective}
\subsection{The Solution Updating Rule}
For the optimization problem (\ref{objective}), based on Theorem \ref{Thm1}, with the pre-selected hyper-parameters $N$ and $\sigma>0$, GSPTO follows a stochastic gradient ascent scheme to solve $\max_{\bm{\mu}}F_N(\bm{\mu},\sigma)$. Specifically, the rule for updating the solution candidate used is
\begin{equation}
\label{sga}
\text{GSPTO}:\qquad \bm{\mu}_{t+1} = \bm{\mu}_{t} + \alpha_t \hat{\nabla} F_N(\bm{\mu}_t),
\end{equation}
where $\hat{\nabla}_{\bm{\mu}} F(\bm{\mu}_t):=\frac{1}{K} \sum_{k=1}^K (\bm{x}_k-\bm{\mu}_t)f_N(\bm{x}_k)$, $\{\bm{x}_k\}_{k=1}^K$ are independently sampled from the multivariate Gaussian distribution $\mathcal{N}(\bm{\mu},\sigma^2 I_d)$, and $f_N(\bm{x}_k)$ is defined in (\ref{fN}). Note that $\widehat{\nabla F_N}(\bm{\mu}_t)$ is a sample estimate of the gradient $\nabla F(\bm{\mu}_t)$:
\begin{equation*}
\begin{split}
\label{nablaF}
\nabla_{\bm{\mu}} F(\bm{\mu}_t) &= \nabla_{\bm{\mu}}  \mathbb{E}_{\bm{x}\sim \mathcal{N}(\bm{\mu},\sigma^2 I_d)}[f_N(\bm{x})] \\
&=  (\sqrt{2\pi}\sigma)^{-1} \int_{\bm{x}\in B(\bm{0};M)} (\bm{x}-\bm{\mu}_t)f_N(\bm{x}) e^{-\frac{\lVert \bm{x} - \bm{\mu}_t \rVert^2}{2\sigma^2}} d\bm{x}.\\
&= \mathbb{E}_{\bm{x\sim \mathcal{N}(\bm{\mu},\sigma I_d)}}[(\bm{x}-\bm{\mu}_t)f_N(\bm{x})].
\end{split}
\end{equation*}
Based on GSPTO, PGS and EPGS are designed in Algorithm \ref{alg1}. They normalize the gradient before updating the solution, which is a common practice to stabilize results.
\begin{algorithm}[h]
  \caption{PGS/EPGS}\label{alg1}
  \SetAlgoLined
  \textbf{Require}: {The power $N>0$, the scaling parameter $\sigma>0$, the objective $f$, the initial value $\bm{\mu}_0$, the number $K$ of sampled points for gradient approximation, the total number $T$ of $\bm{\mu}$-updates, and the learning rate schedule $\{\alpha_t\}_{t=1}^T$.}

  \textbf{Result}: {$\bm{\mu}_T$ - The approximated solution to (\ref{objective}).}

  \For{t from 0 to T-1} {
    Independently sample from $\mathcal{N}(\bm{\mu}_{t},\sigma I_d)$ (multivariate Gaussian distribution), and obtain $\{\bm{x}_k\}_{k=1}^K$.\\ 
    $ \bm{\mu}_{t+1} =\bm{\mu}_{t} + \alpha_t \widehat{\nabla F_N}(\bm{\mu}_t)/\|\widehat{\nabla F_N}(\bm{\mu}_t)\|$, where $\hat{\nabla}_{\bm{\mu}} F(\bm{\mu}_t):=\frac{1}{K} \sum_{k=1}^K (\bm{x}_k-\bm{\mu}_t)f_N(\bm{x}_k)$ and
$f_N(\bm{x}_k):=
\left\{
\begin{array}{ll}
f^N(\bm{x}_k), &\text{for PGS};\\
e^{Nf(\bm{x}_k)}, &\text{for EPGS}.\\
\end{array}
\right.
$
}
  Return($\bm{\mu}_T$).
\end{algorithm}
\begin{remark}
An effective method to avoid computation overflows caused by a large $N$-value is to modify the gradient estimate as
$$\hat{\nabla}_{\bm{\mu}} F(\bm{\mu}_t):=
\left\{
\begin{array}{ll}
\frac{1}{K} \sum_{k=1}^K (\bm{x}_k-\bm{\mu}_k)f_N(\bm{x}_k)f^N(\bm{x}_k)/f^N(\bm{\mu}_t), &\text{for PGS};\\
\frac{1}{K} \sum_{k=1}^K (\bm{x}_k-\bm{\mu}_k)f_N(\bm{x}_k)e^{N(f(\bm{x}_k)-f(\bm{\mu}_t))}, &\text{for EPGS}.\\
\end{array}
\right.
$$
This modification produces the same $\bm{\mu}$-updates as in Algorithm \ref{alg1} because of the gradient-normalization step.
\end{remark}
\section{Convergence Analysis}
We perform convergence analysis for the updating rule (\ref{sga}) under the PGS and EPGS setting (\ref{fN}) on the optimization problem of ($\ref{objective}$). We show that, for any $\varepsilon>0$ and any $\delta>0$, GSPTO converges to a $\delta$-neighborhood of $\bm{x}^*$ with the iteration complexity of $O(d^2\sigma^4\varepsilon^{-2})$. Specifically, with $T=O((d\sigma^2)^{2/(1-2\gamma)}\varepsilon^{-2/(1-2\gamma)})$ times of updating $\bm{\mu}_t$, $\mathbb{E}[\|\nabla F(\bm{\mu}_T)\|^2]<\varepsilon$, where $\gamma$ can be arbitrarily close to 0. The result is summarized in Corollary \ref{summary}.

\subsection{Notation}
\label{notation}
In this section, let $\delta>0$ and $\sigma>0$ be fixed, and $\delta>0$ satisfies
\begin{equation}
\label{delta-neighborhood}
\mathcal{S}_{\bm{x}^*,\delta}:=\{\bm{\mu}\in\mathbb{R}^d: |\mu_i-x_i^*|\leq \delta, \text{ for each } i\in\{1,2,...,d\} \}\subset \mathcal{S},
\end{equation}
where $i$ denotes the $i^{th}$ entry and $\mathcal{S}$ is specified in Assumption \ref{f-bound}. Let $N>0$ be such that for any $\|\bm{\mu}\|\leq\sqrt{d}M$: $\frac{\partial F_{N}(\bm{\mu},\sigma)}{\partial \mu_i}>0$ if $\mu_i<x_i^*-\delta$, and $\frac{\partial F_{N}(\bm{\mu},\sigma)}{\partial \mu_i}<0$ if $\mu_i>x_i^*+\delta$, for all $i\in\{1,2,...,d\}$. Such an $N$ exists because of Theorem \ref{Thm1}. Here, $M$ is specified in Assumption \ref{f-bound}. Let $F_N(\bm{\mu},\sigma)$ be as defined as in Theorem \ref{Thm1}. Unless needed, we omit $N$ and $\sigma$ in this symbol as they remain fixed in this section, and write $F(\bm{\mu})$ instead. $\nabla F(\bm{\mu})$ refers to the gradient of $F$ with respect to $\bm{\mu}$. Let $f_N$ be defined as in (\ref{fN}).

\subsection{Assumptions and Lemmas}
\begin{assumption}
\label{f-bound}
Assume that $f(\bm{x}):\mathcal{S}\rightarrow \mathbb{R}$ is a function satisfying the conditions specified in Theorem \ref{Thm1}, where $\mathcal{S}\subset\mathcal{S}_M:=\{\bm{x}\in \mathbb{R}^d: |x_i|\leq M, i\in\{1,2,...,d\}\}$, for some $M>0$. Also, $\|F(\bm{\mu})\|,\| \nabla F(\bm{\mu})\|<+\infty$ for all $\bm{\mu}\in \mathbb{R}^d$.

\end{assumption}

\begin{assumption}
\label{learning-rates}
Assume that the learning rate $\alpha_t$ satisfies 
$$\alpha_t>0,\; \sum_{t=0}^{\infty}\alpha_t = +\infty, \text{ and } \sum_{t=0}^\infty \alpha_t^2<+\infty.$$
\end{assumption}

\begin{lemma}
Under Assumption \ref{f-bound}, any local or global maximum point $\bm{\mu}^*$ of $F(\bm{\mu})$ belongs to $\mathcal{S}_{\bm{x}^*,\delta}$, which is defined in (\ref{delta-neighborhood}). 
\end{lemma}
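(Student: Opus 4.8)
The plan is to exploit the sign information on the partial derivatives of $F$ that is already available from two complementary sources: the elementary structure of the Gaussian-convolution integral far from the support of $f_N$, and the conclusion of Theorem \ref{Thm1} inside the ball $\{\|\bm{\mu}\|\le\sqrt{d}M\}$. Throughout I will use that $F$ is a Gaussian convolution of the bounded, compactly supported function $f_N$, hence $F\in C^\infty(\mathbb{R}^d)$, and that any local (in particular any global) maximum point $\bm{\mu}^*$ over the open domain $\mathbb{R}^d$ must satisfy $\nabla F(\bm{\mu}^*)=\bm{0}$. The strategy is therefore to exhibit, at any point violating the claimed inclusion, a coordinate along which the partial derivative is nonzero.

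First I would confine the maximizer to the box $\mathcal{S}_M=\{|\mu_i|\le M\}$. Differentiating under the integral sign gives
$$\frac{\partial F}{\partial \mu_i}(\bm{\mu})=\frac{(\sqrt{2\pi}\sigma)^{-d}}{\sigma^2}\int_{\mathcal{S}} f_N(\bm{x})\,(x_i-\mu_i)\,e^{-\|\bm{x}-\bm{\mu}\|^2/(2\sigma^2)}\,d\bm{x},$$
the integral being over $\mathcal{S}$ since $f_N$ vanishes outside it. Because $\mathcal{S}\subset\mathcal{S}_M$, on the support of $f_N$ we have $x_i\le M$; hence if $\mu_i>M$ the factor $x_i-\mu_i$ is strictly negative wherever $f_N>0$, and since $f_N\ge 0$ with $f_N>0$ on a set of positive measure (a neighborhood of $\bm{x}^*$, using $f(\bm{x}^*)>0$ in the PGS case and $e^{Nf}>0$ always in the EPGS case), the whole integral is strictly negative. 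Thus $\partial F/\partial\mu_i(\bm{\mu})<0$ whenever $\mu_i>M$, and symmetrically $>0$ whenever $\mu_i<-M$. A point with some $|\mu_i|>M$ therefore has a nonzero partial derivative and cannot be a local maximum, so every local maximizer $\bm{\mu}^*$ lies in $\mathcal{S}_M$, which yields $\|\bm{\mu}^*\|\le\sqrt{d}M$.

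With $\|\bm{\mu}^*\|\le\sqrt{d}M$ established, I can invoke the sign conclusion of Theorem \ref{Thm1} (in the form recorded in the Notation subsection) coordinatewise: for the fixed $N$, whenever $\mu_i^*<x_i^*-\delta$ one has $\partial F/\partial\mu_i(\bm{\mu}^*)>0$, and whenever $\mu_i^*>x_i^*+\delta$ one has $\partial F/\partial\mu_i(\bm{\mu}^*)<0$. Either case contradicts $\nabla F(\bm{\mu}^*)=\bm{0}$. Hence $|\mu_i^*-x_i^*|\le\delta$ for every $i$, i.e. $\bm{\mu}^*\in\mathcal{S}_{\bm{x}^*,\delta}$, which is exactly the assertion; the argument applies verbatim to a global maximum since it is in particular a local one.

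The only delicate point is the logical ordering together with the domain of validity of the two sign facts: Theorem \ref{Thm1} controls the signs only inside $\{\|\bm{\mu}\|\le\sqrt{d}M\}$, so I must first pin the maximizer to that ball by the elementary computation above before the theorem becomes applicable. The remaining care is routine: justifying differentiation under the integral (dominated convergence, using boundedness of $f_N$ and compactness of $\mathcal{S}$) and confirming that $\{f_N>0\}$ has positive measure so that the boundary integrals are strictly, not merely weakly, signed.
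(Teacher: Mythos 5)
Your proposal is correct and follows essentially the same argument as the paper: the elementary sign computation on $\partial F/\partial\mu_i$ for points with $|\mu_i|>M$ (using that the support of $f_N$ lies in $\mathcal{S}_M$), combined with the sign conclusion of Theorem \ref{Thm1} for points in $\{\|\bm{\mu}\|\le\sqrt{d}M\}$ outside $\mathcal{S}_{\bm{x}^*,\delta}$, and the first-order condition $\nabla F(\bm{\mu}^*)=\bm{0}$ at any maximizer. The only cosmetic difference is organizational — you first confine the maximizer to $\mathcal{S}_M$ and then invoke the theorem, whereas the paper runs a case analysis showing $\nabla F\neq\bm{0}$ at every point outside $\mathcal{S}_{\bm{x}^*,\delta}$ — and your added care about strict positivity of the integral (positive measure of $\{f_N>0\}$) is a point the paper leaves implicit.
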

\begin{proof}
For any point $\bm{\mu}\notin \mathcal{S}_{\bm{x}^*,\delta}$, we show that $\nabla F(\bm{\mu})\neq 0$. If $\bm{\mu}\in \mathcal{S}_M - \mathcal{S}_{\bm{x}^*,\delta}$, then $\|\bm{\mu}\|\leq\sqrt{d}M$ and there is some $j\in\{1,2,...,d\}$ such that $|\mu_j-x_j^*|>\delta$, which implies $\frac{\partial F(\bm{\mu})}{\partial \mu_j}\neq 0$ because of the definition of $N$ in Section \ref{notation}.  

On the other hand, if $\bm{\mu}\notin \mathcal{S}_M$, there is at least one $j$ such that $|\mu_j|>M$. Then,
\begin{equation*}
\begin{split}
\frac{\partial F(\bm{\mu})}{\partial \mu_j} &= \frac{1}{(\sqrt{2\pi})^k\sigma^{k+2}}\int_{\bm{x}\in \mathbb{R}^d} (x_j - \mu_j) e^{-\frac{\lVert \bm{x} - \bm{\mu} \rVert^2}{2\sigma^2}} f_N(\bm{x}) d\bm{x}\\
&= \frac{1}{(\sqrt{2\pi})^k\sigma^{k+2}}\int_{\bm{x}\in \mathcal{S}} (x_j - \mu_j) e^{-\frac{\lVert \bm{x} - \bm{\mu} \rVert^2}{2\sigma^2}} f_N(\bm{x}) d\bm{x},\text{ by def. of }f_N\text{ in (\ref{fN})},\\
&=
\left\{
\begin{array}{ll}
negative, &\text{ if } \mu_j>M;\\
positive, &\text{ if } \mu_j<-M.\\
\end{array}
\right., \text{ since }x_j\in\mathcal{S}\Rightarrow |x_j|\leq M \text{ by Assumption \ref{f-bound}}.
\end{split}
\end{equation*}
In sum, for any point $\bm{\mu}\notin \mathcal{S}_{\bm{x}^*,\delta}$, $\nabla F(\bm{\mu})\neq 0$, which further implies that any local or global maximum point $\bm{\mu}^*$ of $F(\bm{\mu})$ belongs to $\mathcal{S}_{\bm{x}^*,\delta}$ since $\nabla F(\bm{\mu}^*)= 0$.
\end{proof}

\begin{lemma}
\label{Lipschitz}
Under Assumption \ref{f-bound}, for any $\sigma>0$, the objective function $F_N(\bm{\mu},\sigma)$ is Lipschitz Smooth. That is, for any $\bm{\mu}_1, \bm{\mu}_2\in \mathbb{R}^d$,
$$ \| \nabla F_N(\bm{\mu}_1,\sigma) - \nabla F_N(\bm{\mu}_2,\sigma) \| \leq L \| \bm{\mu}_1-\bm{\mu}_2 \|, $$
where $L=f^N(\bm{x}^*)$ for the case of PGS and $L=e^{Nf(\bm{x}^*)}$ for the case of EPGS.
\end{lemma}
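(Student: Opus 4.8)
The plan is to reduce the Lipschitz-smoothness claim to a uniform bound on the operator norm of the Hessian $\nabla^2 F(\bm{\mu})$. Once I show $\sup_{\bm{\mu}}\|\nabla^2 F(\bm{\mu})\|\le L$, the desired inequality follows from the standard fact that $\nabla F(\bm{\mu}_1)-\nabla F(\bm{\mu}_2)=\int_0^1 \nabla^2 F(\bm{\mu}_2+t(\bm{\mu}_1-\bm{\mu}_2))(\bm{\mu}_1-\bm{\mu}_2)\,dt$, so that $\|\nabla F(\bm{\mu}_1)-\nabla F(\bm{\mu}_2)\|\le \big(\sup_{\bm{\mu}}\|\nabla^2 F(\bm{\mu})\|\big)\|\bm{\mu}_1-\bm{\mu}_2\|$. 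The first technical point I would dispatch is the smoothness of $F$ itself: since $f_N$ is bounded and supported on the compact set $\mathcal{S}$ (it vanishes off $\mathcal{S}$ by (\ref{fN})), and the Gaussian kernel together with all its $\bm{\mu}$-derivatives decays rapidly and is dominated uniformly for $\bm{\mu}$ in any bounded set, dominated convergence justifies differentiating under the integral sign arbitrarily many times. Hence $F$ is $C^\infty$ and the Hessian computation below is legitimate.

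\textbf{Computing the Hessian.} Writing the Gaussian density as $p_{\bm{\mu}}(\bm{x})=(\sqrt{2\pi}\sigma)^{-d}e^{-\|\bm{x}-\bm{\mu}\|^2/(2\sigma^2)}$, I would use $\partial_{\mu_i}p_{\bm{\mu}}(\bm{x})=\sigma^{-2}(x_i-\mu_i)p_{\bm{\mu}}(\bm{x})$ and $\partial_{\mu_i}\partial_{\mu_j}p_{\bm{\mu}}(\bm{x})=\big[\sigma^{-4}(x_i-\mu_i)(x_j-\mu_j)-\sigma^{-2}\delta_{ij}\big]p_{\bm{\mu}}(\bm{x})$ to obtain, in matrix form,
\begin{equation*}
\nabla^2 F(\bm{\mu})=\mathbb{E}_{\bm{x}\sim\mathcal{N}(\bm{\mu},\sigma^2 I_d)}\!\left[f_N(\bm{x})\left(\frac{(\bm{x}-\bm{\mu})(\bm{x}-\bm{\mu})^{\!\top}}{\sigma^4}-\frac{I_d}{\sigma^2}\right)\right].
\end{equation*}
The key structural input is that in both settings $f_N$ is nonnegative and attains its global maximum at $\bm{x}^*$, so $0\le f_N(\bm{x})\le f_N(\bm{x}^*)=L$ for all $\bm{x}$, with $L=f^N(\bm{x}^*)$ (PGS) or $L=e^{Nf(\bm{x}^*)}$ (EPGS).

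\textbf{Bounding the operator norm.} To estimate $\|\nabla^2 F(\bm{\mu})\|$ I would test against an arbitrary unit vector $\bm{v}$ and set $u:=\bm{v}^{\!\top}(\bm{x}-\bm{\mu})/\sigma$, which is distributed as $\mathcal{N}(0,1)$ when $\bm{x}\sim\mathcal{N}(\bm{\mu},\sigma^2 I_d)$. This collapses the quadratic form to a one-dimensional expectation,
\begin{equation*}
\bm{v}^{\!\top}\nabla^2 F(\bm{\mu})\,\bm{v}=\frac{1}{\sigma^2}\,\mathbb{E}\!\left[f_N(\bm{x})\,(u^2-1)\right],
\end{equation*}
and since $0\le f_N\le L$ together with $\mathbb{E}[u^2-1]=0$, the quantity $|\mathbb{E}[f_N(\bm{x})(u^2-1)]|$ is controlled by the Gaussian moment $L\,\mathbb{E}[|u^2-1|]$. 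This final bookkeeping step is the main obstacle: the Hessian matrix is indefinite, so one must use the nonnegativity of $f_N$ and the cancellation $\mathbb{E}[u^2-1]=0$ carefully, rather than a crude triangle-inequality bound, to arrive at the clean constant $L$ stated in the lemma. I would assemble the PGS and EPGS cases identically, the only difference being the explicit value of the envelope $L$.
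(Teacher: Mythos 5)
Your overall strategy (bound the operator norm of the Hessian uniformly, then integrate along the segment) is legitimate and genuinely different from the paper's proof, which is a one-line direct manipulation of the gradient difference. However, your argument has a concrete gap: the factor $1/\sigma^2$ never disappears. Carrying out your own ``bookkeeping'' step, with $u:=\bm{v}^{\top}(\bm{x}-\bm{\mu})/\sigma\sim\mathcal{N}(0,1)$ you get $\bm{v}^{\top}\nabla^2F(\bm{\mu})\,\bm{v}=\sigma^{-2}\,\mathbb{E}[f_N(\bm{x})(u^2-1)]$, and the best bound obtainable from $0\le f_N\le L$ together with the cancellation $\mathbb{E}[u^2-1]=0$ is
\begin{equation*}
\bigl|\mathbb{E}[f_N(\bm{x})(u^2-1)]\bigr|\;\le\; L\,\mathbb{E}\bigl[(u^2-1)^+\bigr]\;=\;2\phi(1)L\;\approx\;0.48\,L,
\end{equation*}
where $\phi$ is the standard normal density (split $u^2-1$ into its positive and negative parts, each of which has expectation $2\phi(1)$). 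Hence your method delivers $\sup_{\bm{\mu}}\|\nabla^2F(\bm{\mu})\|\le 2\phi(1)L/\sigma^2$, not $L$: whenever $\sigma^2<2\phi(1)\approx0.48$ this exceeds the claimed constant, and the regime the paper emphasizes is exactly $\sigma\in(0,1)$ (e.g.\ $\sigma=0.1$ in the image-attack experiments, where your constant is roughly $48L$).

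Moreover, no sharper bookkeeping can rescue the argument as you have set it up, because with the honest gradient you are differentiating (your $\partial_{\mu_i}p_{\bm{\mu}}=\sigma^{-2}(x_i-\mu_i)p_{\bm{\mu}}$ carries the $\sigma^{-2}$) the lemma's $\sigma$-free constant is in fact false for small $\sigma$: take $d=1$ and $f_N$ a smoothed version of $L\cdot\mathbf{1}_{[0,A]}$ with $A$ large, so that $F(\mu)\approx L\,\Phi(\mu/\sigma)$ near $\mu=0$; then $\sup_\mu|F''(\mu)|\approx\phi(1)L/\sigma^2>L$ once $\sigma^2<\phi(1)$. The statement is only correct under the paper's implicit normalization, in which ``$\nabla F_N$'' denotes $\mathbb{E}_{\bm{x}\sim\mathcal{N}(\bm{\mu},\sigma^2I_d)}[(\bm{x}-\bm{\mu})f_N(\bm{x})]=\sigma^2\nabla_{\bm{\mu}}F_N$ --- precisely the quantity estimated by $\hat{\nabla}F_N$ in the updating rule (\ref{sga}), and the quantity the paper's own (also not airtight) one-line proof manipulates. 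Under that convention your Jacobian acquires a factor $\sigma^2$, the $\sigma^{-2}$ cancels, and your argument closes with the even better constant $2\phi(1)L\le L$. So the missing ingredient is to identify which gradient normalization the lemma is about; as written, your derivation proves a different bound, one that is weaker than the lemma's claim exactly in the small-$\sigma$ regime the paper cares about.
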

\begin{proof}
$F_N(\bm{\mu},\sigma)=\mathbb{E}_{\bm{x}\sim\mathcal{N}(\bm{\mu},\sigma^2 I_d)}[f_N(\bm{x})]$, where $f_N(\bm{x})$ is as defined in (\ref{fN}). $\nabla_{\bm{\mu}} F_N(\bm{\mu},\sigma)=\mathbb{E}_{\bm{x}\sim\mathcal{N}(\bm{\mu},\sigma^2 I_d)}[(\bm{x}-\bm{\mu})f_N(\bm{x})]$.
Then,
$$\| \nabla F_N(\bm{\mu}_1,\sigma) - \nabla F_N(\bm{\mu}_2,\sigma) \|= \|\mathbb{E}_{\bm{x}\sim\mathcal{N}(\bm{\mu},\sigma^2 I_d)}[(\bm{\mu}_1-\bm{\mu}_2)f_N(\bm{x})]\|\leq \|\bm{\mu}_1-\bm{\mu}_2 \|f_N(\bm{x}^*).$$
Hence, $L=f_N(\bm{x}^*)$, which is $f^N(\bm{x}^*)$ for PGS and $L=e^{Nf(\bm{x}^*)}$ for EPGS.
\end{proof}

\begin{lemma}
\label{variance-bound}
Under Assumption \ref{f-bound}, for any $\sigma>0$, let $\hat{\nabla}_{\bm{\mu}}F_N(\bm{\mu},\sigma)$ be defined in (\ref{sga}). Then,
$\mathbb{E}[\|\hat{\nabla}_{\bm{\mu}}F_N(\bm{\mu},\sigma)\|^2]<G$, where
\begin{equation*}
\begin{split}
G=\left\{
\begin{array}{ll}
d\sigma^2 f^{2N}(\bm{x}^*),& \text{ for PGS};\\
d\sigma^2 e^{2Nf(\bm{x}^*)},& \text{ for EPGS}.
\end{array}
\right.
\end{split}
\end{equation*}
\end{lemma}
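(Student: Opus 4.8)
The plan is to reduce the second moment of the $K$-sample average to the second moment of a single summand, which is what makes the stated bound independent of $K$, and then to bound that single-sample quantity by a product of two elementary factors. Write $\bm{g}_k := (\bm{x}_k-\bm{\mu})f_N(\bm{x}_k)$, so that the estimator in (\ref{sga}) is $\hat{\nabla}_{\bm{\mu}}F_N = \frac{1}{K}\sum_{k=1}^K \bm{g}_k$ with $\{\bm{x}_k\}_{k=1}^K$ i.i.d.\ from $\mathcal{N}(\bm{\mu},\sigma^2 I_d)$. First I would apply Jensen's inequality to the convex map $\bm{v}\mapsto\|\bm{v}\|^2$ to obtain the pointwise bound $\|\hat{\nabla}_{\bm{\mu}}F_N\|^2 \le \frac{1}{K}\sum_{k=1}^K \|\bm{g}_k\|^2$. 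Taking expectations and using that the $\bm{x}_k$ are identically distributed collapses the average, giving $\mathbb{E}[\|\hat{\nabla}_{\bm{\mu}}F_N\|^2] \le \mathbb{E}[\|(\bm{x}-\bm{\mu})f_N(\bm{x})\|^2]$ for a single $\bm{x}\sim\mathcal{N}(\bm{\mu},\sigma^2 I_d)$. This convexity step is preferable to expanding the sum directly, since the cross terms would reintroduce the nonzero mean $\nabla F(\bm{\mu})$ and clutter the estimate.

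Next I would factor $\|(\bm{x}-\bm{\mu})f_N(\bm{x})\|^2 = \|\bm{x}-\bm{\mu}\|^2\, f_N(\bm{x})^2$ and bound the two factors separately. For the scalar factor, I use that $f_N(\bm{x})=0$ off $\mathcal{S}$ and that $f(\bm{x})\le f(\bm{x}^*)$ on $\mathcal{S}$ (together with the non-negativity of $f$ in the PGS case) to conclude $f_N(\bm{x})^2 \le f_N(\bm{x}^*)^2$, where $f_N(\bm{x}^*)$ equals $f^N(\bm{x}^*)$ for PGS and $e^{Nf(\bm{x}^*)}$ for EPGS. For the vector factor, since each coordinate of $\bm{x}-\bm{\mu}$ is $\mathcal{N}(0,\sigma^2)$, I have $\mathbb{E}[\|\bm{x}-\bm{\mu}\|^2] = \sum_{i=1}^d \mathbb{E}[(x_i-\mu_i)^2] = d\sigma^2$. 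Pulling the constant bound on $f_N$ out of the expectation then yields $\mathbb{E}[\|(\bm{x}-\bm{\mu})f_N(\bm{x})\|^2] \le f_N(\bm{x}^*)^2\,\mathbb{E}[\|\bm{x}-\bm{\mu}\|^2] = d\sigma^2 f_N(\bm{x}^*)^2$, which is exactly $G$ in both the PGS and EPGS cases.

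There is no serious obstacle here, as the argument is essentially a bounded-integrand estimate against a Gaussian; the only point requiring a little care is the strictness of the final inequality. I would justify it by noting that the separation hypothesis of Theorem \ref{Thm1}, namely $\sup_{\|\bm{x}-\bm{x}^*\|\ge\delta} f(\bm{x}) < f(\bm{x}^*)$, guarantees that $f_N(\bm{x})^2 < f_N(\bm{x}^*)^2$ on a set of positive Lebesgue measure (indeed everywhere away from the maximizer, and on all of $\mathbb{R}^d\setminus\mathcal{S}$). Since the Gaussian density is strictly positive on that set, integrating the strict pointwise inequality produces the strict bound $\mathbb{E}[\|\hat{\nabla}_{\bm{\mu}}F_N\|^2] < G$ asserted in the statement, with finiteness of all quantities guaranteed by Assumption \ref{f-bound} and the boundedness of $f_N$.
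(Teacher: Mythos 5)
Your proof is correct and arrives at the same bound as the paper, but by a noticeably different decomposition. The paper expands the squared norm of the $K$-sample average into the double sum $\frac{1}{K^2}\sum_{k,l}\mathbb{E}\left[(\bm{x}_k-\bm{\mu}_t)'(\bm{x}_l-\bm{\mu}_t)f_N(\bm{x}_k)f_N(\bm{x}_l)\right]$, bounds $f_N(\bm{x}_k)f_N(\bm{x}_l)\leq f_N^2(\bm{x}^*)$, and then applies Cauchy--Schwarz twice (once pointwise to the inner product, once in expectation) before evaluating $\mathbb{E}[\|\bm{x}_k-\bm{\mu}_t\|^2]=d\sigma^2$. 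You instead use Jensen's inequality (convexity of $\bm{v}\mapsto\|\bm{v}\|^2$) to collapse the average immediately to a single-sample second moment, then factor $\|(\bm{x}-\bm{\mu})f_N(\bm{x})\|^2=\|\bm{x}-\bm{\mu}\|^2 f_N(\bm{x})^2$ and bound the two factors separately. The two routes are equally elementary and give the identical constant $G=d\sigma^2 f_N^2(\bm{x}^*)$; yours is slightly more economical because one convexity step disposes of all cross terms that the paper must handle with two applications of Cauchy--Schwarz. Your proposal also adds something the paper omits: the paper's displayed chain only establishes the non-strict inequality $\leq G$, even though the lemma asserts $<G$, whereas your final observation (that $f_N(\bm{x})^2<f_N(\bm{x}^*)^2$ on a set of positive measure, e.g.\ on $\mathbb{R}^d\setminus\mathcal{S}$ where $f_N$ vanishes, integrated against a strictly positive Gaussian density) supplies the missing justification for strictness. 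The only caveat is the degenerate PGS case $f(\bm{x}^*)=0$ (then $G=0$ and strictness fails), which requires $f(\bm{x}^*)>0$; this is implicit in the paper's setting.
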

\begin{proof}
\begin{equation*}
\begin{split}
\mathbb{E}\left[ \|\hat{\nabla}F_{N}(\bm{\mu}_t)\|^2\right] &= \frac{1}{K^2} \sum_{k=1}^K \sum_{l=1}^K \mathbb{E}_{\bm{x}_k,\bm{x}_l\sim \mathcal{N}(\bm{\mu}_t,\sigma^2 I_d)}[(\bm{x}_k-\bm{\mu}_{t})'(\bm{x}_l-\bm{\mu}_{t})f_N(\bm{x}_k)f_N(\bm{x}_l)]\\
&\leq f^2_N(\bm{x}^*) \frac{1}{K^2} \sum_{k=1}^K \sum_{l=1}^K \mathbb{E}_{\bm{x}_k,\bm{x}_l\sim \mathcal{N}(\bm{\mu}_t,\sigma^2 I_d)}[\left|(\bm{x}_k-\bm{\mu}_{t})'(\bm{x}_l-\bm{\mu}_{t})\right|]\\
&\leq f^2_N(\bm{x}^*)  \frac{1}{K^2} \sum_{k=1}^K \sum_{l=1}^K \mathbb{E}_{\bm{x}_k,\bm{x}_l\sim \mathcal{N}(\bm{\mu}_t,\sigma^2 I_d)}[\|(\bm{x}_k-\bm{\mu}_{t})\|\cdot\|(\bm{x}_l-\bm{\mu}_{t})\|],\\
&\leq  f^2_N(\bm{x}^*) \frac{1}{K^2} \sum_{k=1}^K \sum_{l=1}^K \sqrt{ \mathbb{E}[\|\bm{x}_{k}-\bm{\mu}_{t}\|^2] \mathbb{E} [\|\bm{x}_{l}-\bm{\mu}_{t})\|^2]},\quad \text{by Schwarz Ineq.,}\\
&=f^2_N(\bm{x}^*) \sigma^2d,\quad d\text{ denotes the number of dimensions},
\end{split}
\end{equation*}
where the third line is by Cauchy-Schwarz Inequality. Replacing $f^2_N(\bm{x}^*)=e^{2Nf(\bm{x}^*)}$ for EPGS and $f^2_N(\bm{x}^*)=f^{2N}(\bm{x}^*)$ for PGS gives the desired result.
\end{proof}

\subsection{Convergence Rate}
\begin{theorem}
\label{convergence-rate}
Let $\{\bm{\mu}_t\}_{t=0}^T\subset \mathbb{R}^d$ be produced by following the iteration rule of (\ref{sga}), with a pre-selected and deterministic $\bm{\mu}_0$ and all the involved terms defined as in Section \ref{notation}. Then, under Assumption \ref{f-bound} and \ref{learning-rates}, we have that
$$\sum_{t=0}^{T-1} \alpha_t \mathbb{E}[\|\nabla F(\bm{\mu}_t)\|^2] \leq f(\bm{x}^*) - F(\bm{\mu}_{0}) + L G \sum_{t=0}^\infty \alpha_t^2,$$
where $L$ and $G$ are as defined in Lemma \ref{Lipschitz} and Lemma \ref{variance-bound}, respectively.
\end{theorem}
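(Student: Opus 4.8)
The plan is to run the textbook stochastic-gradient-ascent analysis for an $L$-smooth objective, assembling the three lemmas already proved. First I would invoke Lemma~\ref{Lipschitz} in its quadratic-lower-bound form, valid for any $L$-smooth $F$ and any $\bm{\mu}_1,\bm{\mu}_2$:
\[ F(\bm{\mu}_2) \geq F(\bm{\mu}_1) + \langle \nabla F(\bm{\mu}_1),\, \bm{\mu}_2-\bm{\mu}_1\rangle - \tfrac{L}{2}\|\bm{\mu}_2-\bm{\mu}_1\|^2. \]
Specializing to the trajectory with $\bm{\mu}_1=\bm{\mu}_t$ and $\bm{\mu}_2=\bm{\mu}_{t+1}=\bm{\mu}_t+\alpha_t\,\hat{\nabla}F_N(\bm{\mu}_t)$ from the un-normalized rule~(\ref{sga}) gives the one-step ascent inequality
\[ F(\bm{\mu}_{t+1}) \geq F(\bm{\mu}_t) + \alpha_t\langle \nabla F(\bm{\mu}_t),\, \hat{\nabla}F_N(\bm{\mu}_t)\rangle - \tfrac{L\alpha_t^2}{2}\|\hat{\nabla}F_N(\bm{\mu}_t)\|^2. \]

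Next I would take conditional expectation given the history $\mathcal{F}_t$ (the $\sigma$-algebra with respect to which $\bm{\mu}_t$ is measurable, generated by the samples drawn before step $t$). The crucial input is the unbiasedness of the sample gradient, $\mathbb{E}[\hat{\nabla}F_N(\bm{\mu}_t)\mid\mathcal{F}_t]=\nabla F(\bm{\mu}_t)$, which is exactly the identity $\nabla F(\bm{\mu})=\mathbb{E}_{\bm{x}\sim\mathcal{N}(\bm{\mu},\sigma^2 I_d)}[(\bm{x}-\bm{\mu})f_N(\bm{x})]$ recorded just after~(\ref{sga}) together with the i.i.d.\ sampling of $\{\bm{x}_k\}$. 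Then the cross term collapses to $\|\nabla F(\bm{\mu}_t)\|^2$, while the quadratic term is controlled by the second-moment bound $\mathbb{E}[\|\hat{\nabla}F_N(\bm{\mu}_t)\|^2]\leq G$ of Lemma~\ref{variance-bound}. Applying the tower property yields
\[ \mathbb{E}[F(\bm{\mu}_{t+1})] \geq \mathbb{E}[F(\bm{\mu}_t)] + \alpha_t\,\mathbb{E}[\|\nabla F(\bm{\mu}_t)\|^2] - \tfrac{LG}{2}\alpha_t^2. \]

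I would then sum this telescoping inequality over $t=0,\dots,T-1$ and rearrange so that $\sum_t \alpha_t\,\mathbb{E}[\|\nabla F(\bm{\mu}_t)\|^2]$ sits on the left, leaving $\mathbb{E}[F(\bm{\mu}_T)]-F(\bm{\mu}_0)+\tfrac{LG}{2}\sum_{t=0}^{T-1}\alpha_t^2$ on the right. To close, I bound $\mathbb{E}[F(\bm{\mu}_T)]\leq \sup_{\bm{\mu}}F(\bm{\mu})\leq \max_{\bm{x}\in\mathcal{S}}f_N(\bm{x})=f_N(\bm{x}^*)$ (the quantity written as $f(\bm{x}^*)$ in the statement, which here coincides with the Lipschitz constant $L$), and extend the finite sum $\sum_{t=0}^{T-1}\alpha_t^2$ to the convergent $\sum_{t=0}^{\infty}\alpha_t^2$ guaranteed by Assumption~\ref{learning-rates}; the harmless factor $\tfrac{1}{2}$ is absorbed into the stated constant $LG$.

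The only genuinely delicate step, and the one I expect to be the main obstacle, is the conditional-expectation bookkeeping underlying the unbiasedness claim: one must set up the filtration so that $\bm{\mu}_t$ depends only on samples from earlier steps, justify differentiation under the integral sign so that $\mathbb{E}[\hat{\nabla}F_N(\bm{\mu}_t)\mid\mathcal{F}_t]$ truly equals $\nabla F(\bm{\mu}_t)$, and confirm that $F$ and $\nabla F$ are finite everywhere so the quadratic bound and telescoping are legitimate. All of these are supplied by the compactness of $\mathcal{S}$, the boundedness of $f_N$, and the finiteness conditions in Assumption~\ref{f-bound}; the remainder is routine algebra.
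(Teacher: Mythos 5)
Your proposal is correct and follows essentially the same route as the paper's proof: a one-step ascent inequality obtained from $L$-smoothness (Lemma~\ref{Lipschitz}), unbiasedness of $\hat{\nabla} F_N(\bm{\mu}_t)$ handled via the tower property, the second-moment bound $G$ from Lemma~\ref{variance-bound}, telescoping over $t=0,\dots,T-1$, and bounding $\mathbb{E}[F(\bm{\mu}_T)]$ by the maximum of the transformed objective. The only differences are cosmetic: the paper derives the one-step inequality via the gradient mean value theorem plus the Lipschitz bound (yielding the constant $L$ rather than your tighter $L/2$, which you rightly absorb), and your explicit remark that the final bound is really $f_N(\bm{x}^*)$ --- written as $f(\bm{x}^*)$ in the statement --- cleanly resolves an imprecision that the paper's own last step glosses over.
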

\begin{remark}
If $\alpha_t=(t+1)^{-(1/2+\gamma)}$ with $\gamma\in(0,1/2)$. Then, 
$$ \frac{f(\bm{x}^*) - F(\bm{\mu}_{0}) + L G \sum_{t=1}^\infty t^{-(1+2\gamma)}}{\sum_{t=1}^{T} t^{-(1/2+\gamma)}}<\frac{f(\bm{x}^*) - F(\bm{\mu}_{0}) + L G \sum_{t=1}^\infty t^{-(1+2\gamma)}}{\int_{1}^Tt^{-(1/2+\gamma)}dt }=O(d\sigma^2T^{-1/2+\gamma}),$$
where $d\sigma^2$ comes from $G$ in Lemma \ref{variance-bound}. This inequality and Theorem \ref{convergence-rate} implies that after $T=O((d\sigma^2\varepsilon^{{-1}})^{2/(1-2\gamma)})$ times of updating $\bm{\mu}_t$ by GSPTO, $\mathbb{E}[\|\nabla F(\bm{\mu}_T)\|^2]<\varepsilon$. In sum, the GSPTO method (\ref{sga}) converges to a $\delta$-neighborhood of $\bm{x}^*$ with a rate of $O((d\sigma^2\varepsilon^{{-1}})^{2/(1-2\gamma)})$, where $\gamma$ can be arbitrarily close to 0. 
\end{remark}
\begin{proof}
By the Gradient Mean Value Theorem, there exists $\bm{\nu}_t\in\mathbb{R}^d$ such that $\nu_{t,i}$ lies between $\mu_{t+1,i}$ and $\mu_{t,i}$ for each of the $i$th entry, and
\begin{equation*}
\begin{split}
F(\bm{\mu}_{t+1})&=F(\bm{\mu}_t) + (\nabla F(\bm{\nu}_t))' (\bm{\mu}_{t+1}-\bm{\mu}_t),\quad ' \text{ denotes matrix transpose,}\\
&=F(\bm{\mu}_t) + (\nabla F(\bm{\mu}_t))' (\bm{\mu}_{t+1}-\bm{\mu}_t) + (\nabla F(\bm{\nu}_t)-\nabla F(\bm{\mu}_t))'(\bm{\mu}_{t+1}-\bm{\mu}_t)\\
&=F(\bm{\mu}_t) + \alpha_t (\nabla F(\bm{\mu}_t))' (\hat{\nabla} F(\bm{\mu}_t)) - (\nabla F(\bm{\mu}_t)-\nabla F(\bm{\nu}_t))'(\bm{\mu}_{t+1}-\bm{\mu}_t)\\
&\geq F(\bm{\mu}_t) + \alpha_t (\nabla F(\bm{\mu}_t))' (\hat{\nabla} F(\bm{\mu}_t))-L\|\bm{v}_t-\bm{\mu}_t\|\cdot\|\bm{\mu}_{t+1}-\bm{\mu}_t \|, \;\text{by Lemma \ref{Lipschitz}},\\
&\geq F(\bm{\mu}_t) + \alpha_t (\nabla F(\bm{\mu}_t))' (\hat{\nabla} F(\bm{\mu}_t))-L\|\bm{\mu}_{t+1}-\bm{\mu}_t \|^2,\;\text{$v_{t,i}$ is between $\mu_{t+1,i}$ and $\mu_{t,i}$}\\
&= F(\bm{\mu}_t) + \alpha_t (\nabla F(\bm{\mu}_t))' (\hat{\nabla} F(\bm{\mu}_t))-\alpha_t^2 L\|\hat{\nabla} F_N(\bm{\mu}_t)\|^2.
\end{split}
\end{equation*}
Hence, we have
$$ F(\bm{\mu}_{t+1})\geq   F(\bm{\mu}_t) + \alpha_t (\nabla F(\bm{\mu}_t))' (\hat{\nabla} F(\bm{\mu}_t))-\alpha_t^2 L\|\hat{\nabla} F_N(\bm{\mu}_t)\|^2.$$
Taking the expectation of both sides gives
\begin{equation}
\begin{split}
\label{iterate}
\mathbb{E}[F(\bm{\mu}_{t+1})] &\geq \mathbb{E}[F(\bm{\mu}_{t})]+\alpha_t\mathbb{E}[ \|\nabla F(\bm{\mu}_t)) \|^2] - \alpha_t^2 L \mathbb{E}[\| \hat{\nabla} F_N(\bm{\mu}_t) \|^2]\\
&\geq \mathbb{E}[F(\bm{\mu}_{t})]+\alpha_t\mathbb{E}[ \|\nabla F(\bm{\mu}_t)) \|^2] - \alpha_t^2 L G,\;\text{by Lemma \ref{variance-bound},}
\end{split}
\end{equation}
where for the first line, note that 
$$\mathbb{E}[(\nabla F(\bm{\mu}_t))' (\hat{\nabla} F(\bm{\mu}_t))]=\mathbb{E}\left[ \mathbb{E}[(\nabla F(\bm{\mu}_t))' (\hat{\nabla} F(\bm{\mu}_t)) | \bm{\mu}_t ] \right]=\mathbb{E}[ \|\nabla F(\bm{\mu}_t)) \|^2].$$
Taking the sum from $t=0$ to $t=T-1$ on both sides of (\ref{iterate}) gives
$$
\mathbb{E}[F(\bm{\mu}_{T})] \geq \mathbb{E}[F(\bm{\mu}_{0})]+\sum_{t=0}^{T-1}\alpha_t\mathbb{E}[ \|\nabla F(\bm{\mu}_t)) \|^2] -L G\sum_{t=0}^{T-1}\alpha_t^2.  \\
$$
Re-organizing the terms gives
\begin{equation*}
\begin{split}
\sum_{t=0}^{T-1} \alpha_t \mathbb{E}[\|\nabla F(\bm{\mu}_t)\|^2] &\leq \mathbb{E}[F(\bm{\mu}_{T})] -  \mathbb{E}[F(\bm{\mu}_{0})]+ L G \sum_{t=0}^{T-1} \alpha_t^2\\
&\leq f(\bm{x}^*) - F(\bm{\mu}_{0}) + L G \sum_{t=0}^\infty \alpha_t^2\\
\end{split} 
\end{equation*}
This finishes the proof for Theorem \ref{convergence-rate}.
\end{proof}
We summarize the above results in the following corollary.
\begin{corollary}
\label{summary}
Suppose Assumption \ref{f-bound} and \ref{learning-rates} hold. Given any $\delta>0$ and $\sigma>0$, there exists $N>0$ such that $F_N(\bm{\mu})$ has all its local maximums in $\mathcal{S}_{\bm{x}^*,\delta}:=\{\bm{\mu}\in\mathbb{R}^d: |\mu_i-x_i^*|\leq \delta, \text{ for each } i\in\{1,2,...,d\} \}$. For any $\varepsilon>0$, under either the PGS or EPGS setting, the updating rule (\ref{sga}) of GSPTO produces $\bm{\mu}_t$ that converges to a local maximum point of $F_N({\bm{\mu}})$, which lies in the $\delta$-neighborhood $\mathcal{S}_{\bm{x}^*,\delta}$ of $\bm{x}^*$, with the iteration complexity of $O(d^2\sigma^4\varepsilon^{-2})$. Specifically, after $T=O((d\sigma^2\varepsilon^{{-1}})^{2/(1-2\gamma)})$ times of $\bm{\mu}_t$-updating by (\ref{sga}), $\mathbb{E}[\|\nabla F(\bm{\mu}_T)\|^2]<\varepsilon$, where $\gamma\in(0,1/2)$ is a parameter in the learning rate $\alpha_t:=(t+1)^{-(1/2+\gamma)}$ and can be arbitrarily close to 0.
\end{corollary}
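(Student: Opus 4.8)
The plan is to read Corollary \ref{summary} as an assembly of the earlier results, splitting the proof into a \emph{localization} part (where the stationary points of $F_N$ sit) and a \emph{rate} part (how fast the stochastic iteration drives the gradient to zero). First I would fix $\delta>0$ and $\sigma>0$ and invoke Theorem \ref{Thm1}: it produces a threshold $N_{\delta,\sigma,M}$ so that, once $N$ exceeds it, $\partial F_N/\partial\mu_i$ is strictly positive for $\mu_i<x_i^*-\delta$ and strictly negative for $\mu_i>x_i^*+\delta$ on $\|\bm{\mu}\|\le \sqrt{d}M$. Feeding this $N$ into the first (unlabeled) lemma of the preceding subsection confines every stationary point of $F_N$, and in particular every local maximum, to the box $\mathcal{S}_{\bm{x}^*,\delta}$. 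This settles the opening sentence of the corollary and fixes the value of $N$ used in the rest of the argument.

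For the rate, I would specialize Theorem \ref{convergence-rate} to the schedule $\alpha_t=(t+1)^{-(1/2+\gamma)}$ with $\gamma\in(0,1/2)$. Assumption \ref{learning-rates} holds because $\sum_t\alpha_t=\infty$ (exponent $\le 1$) while $\sum_t\alpha_t^2=\sum_t(t+1)^{-(1+2\gamma)}<\infty$ (exponent $>1$); the latter makes the right-hand side of Theorem \ref{convergence-rate} a finite constant $C:=f(\bm{x}^*)-F(\bm{\mu}_0)+LG\sum_t\alpha_t^2$, with $L$ and $G$ from Lemmas \ref{Lipschitz} and \ref{variance-bound}. Since $N$ is fixed, $L=f^N(\bm{x}^*)$ (resp.\ $e^{Nf(\bm{x}^*)}$) is a mere constant, and the only dependence on dimension and smoothing scale is the factor $d\sigma^2$ inside $G$. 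Lower-bounding the left-hand side by an integral, $\sum_{t=0}^{T-1}\alpha_t\ge\int_1^T t^{-(1/2+\gamma)}\,dt=\Theta(T^{1/2-\gamma})$, and dividing yields $\min_{0\le t<T}\mathbb{E}[\|\nabla F(\bm{\mu}_t)\|^2]\le C/\sum_t\alpha_t=O(d\sigma^2\,T^{-1/2+\gamma})$. Setting this equal to $\varepsilon$ and solving gives $T=O\big((d\sigma^2\varepsilon^{-1})^{2/(1-2\gamma)}\big)$, and letting $\gamma\downarrow 0$ drives the exponent to $2$, recovering the advertised $O(d^2\sigma^4\varepsilon^{-2})$ complexity.

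The two halves then combine: as $T\to\infty$ the rate bound forces the best iterate's expected squared gradient to vanish, so the iterates accumulate at the stationary set of $F_N$, which by the localization step lies entirely inside $\mathcal{S}_{\bm{x}^*,\delta}$; hence the recovered point is within $\delta$ of $\bm{x}^*$ componentwise. The step I expect to be the main obstacle is precisely this last bridge, namely upgrading the finite-time, \emph{averaged} guarantee $\min_t\mathbb{E}[\|\nabla F(\bm{\mu}_t)\|^2]<\varepsilon$ into the corollary's asymptotic claim that $\bm{\mu}_t$ \emph{converges to a local maximum point}. Theorem \ref{convergence-rate} controls only a weighted average (honestly, only the best) iterate, so the clean deliverable is a bound on $\min_t$ or on a randomly-stopped iterate, not on the terminal $\bm{\mu}_T$ as the remark's phrasing $\mathbb{E}[\|\nabla F(\bm{\mu}_T)\|^2]<\varepsilon$ suggests; I would either restate the conclusion for the best iterate or justify a last-iterate version separately. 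More seriously, ``stationary'' is weaker than ``local maximum'': ruling out convergence to a saddle or a local minimum requires genuine stochastic-approximation machinery, for instance a Robbins--Siegmund supermartingale or ODE argument under Assumption \ref{learning-rates}, combined with the one-sided sign of $\partial F_N/\partial\mu_i$ outside the box supplied by Theorem \ref{Thm1}, rather than the $L$-smoothness and bounded-variance estimates alone. I would flag this as the technically delicate point and lean on the localization lemma together with that sign condition to argue that the only attracting stationary behavior occurs inside $\mathcal{S}_{\bm{x}^*,\delta}$.
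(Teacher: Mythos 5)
Your proposal is correct and takes essentially the same route as the paper, which gives no separate proof of Corollary~\ref{summary} at all: it simply assembles Theorem~\ref{Thm1}, the unlabeled localization lemma, Theorem~\ref{convergence-rate}, and the remark specializing to $\alpha_t=(t+1)^{-(1/2+\gamma)}$ --- exactly your two-part (localization plus rate) structure, with the same integral lower bound $\sum_{t}\alpha_t \ge \int_1^T t^{-(1/2+\gamma)}dt$ and the same solve-for-$T$ step. The two gaps you flag (the bound controls a weighted average rather than the terminal iterate $\bm{\mu}_T$, and stationarity does not by itself yield convergence to a \emph{local maximum}) are genuine weaknesses of the corollary as the paper states it, not defects of your argument; the paper never bridges them either.
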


\section{Experiments}
\label{experiments}
\subsection{Effects of Increasing Powers}
We illustrate the improvements made by increasing $N$ for PGS/EPGS through an example problem of
\begin{equation}
\label{objective2log}
\max_{\bm{x}\in \mathbb{R}^d} f(\bm{x}):= -\log(\|\bm{x}-\bm{m}_1\|^2+10^{-5}) - \log(\|\bm{x}-\bm{m}_2\|^2+10^{-2}),
\end{equation}
the global maximum point $\bm{m}_1\in \mathbb{R}^d$ has all its entries equal to $-0.5$, and the local maximum point $\bm{m}_2\in \mathbb{R}^d$ has all its entries equal to $0.5$. The graph of its 2D-version is plotted in Figure \ref{objective-2log}.
\begin{figure}[ht]
    \centering
    \includegraphics[scale=0.45]{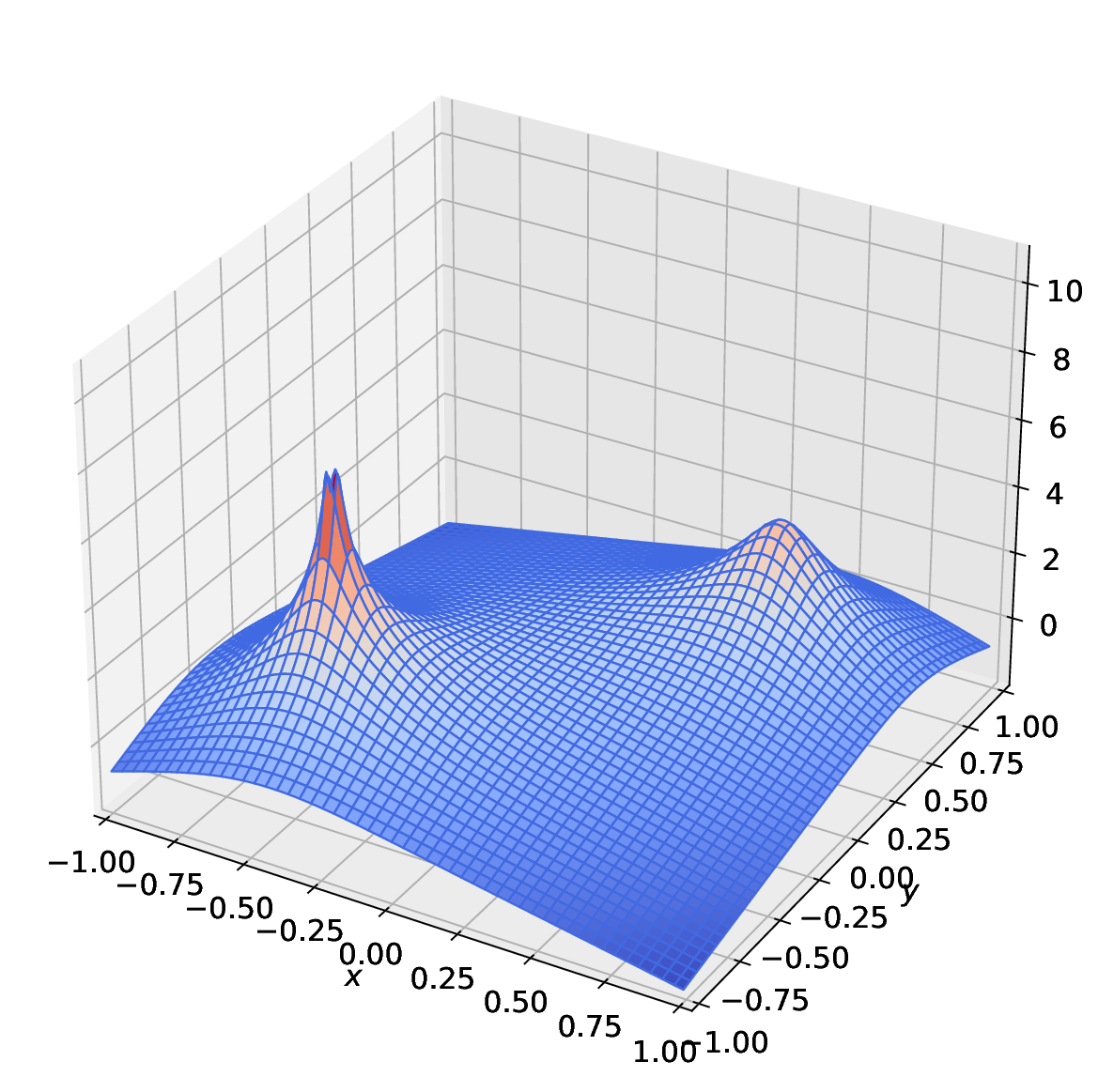}
    \caption{$f(\bm{x}) = -\log(\|\bm{x}-\bm{m}_1\|^2+10^{-5}) - \log(\|\bm{x}-\bm{m}_2\|^2+10^{-2})$}
    \label{objective-2log}
\end{figure}
  
With each value of $N$, both the PGS and EGS are performed to solve this problem. The $N$-candidate set is $\{10,20,...,65\}$ for PGS and $\{1.0, 1.5, 2.0, ..., 4.5\}$ for EGS. For each $N$ value, we do 100 trials to stabilize the result. In each trial, the initial solution candidate $\bm{\mu}_0$ is uniformly sampled from $C:=\{\bm{x}\in \mathbb{R}^d |x_i\in[-1.0,1.0], i\in\{1,2,...,d\}\}$, where $x_i$ represents the $i^{th}$ entry of $\bm{x}$. We set the initial learning rate as 0.1, the scaling parameter $\sigma$ as 0.5, and the total number of solution updates as 1000. The objective for Power-GS is modified to be $f_1(\bm{x}):=f(\bm{x})+10$ to ensure that the PGS agent will not encounter negative fitness values during the 1000 updates.

We perform the experiments in two settings, one is two-dimensional ($d=2$) and the other is five-dimensional ($d=5$). 
The results, plotted in Figure \ref{increase-N}, show that, as $N$ increases, the distance between the produced solution $\bm{\mu}^*$ and the global maximum point $\bm{x}^*$ approaches zero (see the decreasing MSE curve in the plot), which is consistent with Theorem \ref{Thm1} and the idea that $F(\bm{\mu})$'s maximum $\bm{\mu}^*$ approaches the global maximum point $\bm{x}^*$ of $f$ as we put more weight on $f(\bm{x}^*)$ 

\begin{figure}[ht]
    \centering
    \includegraphics[scale=0.45]{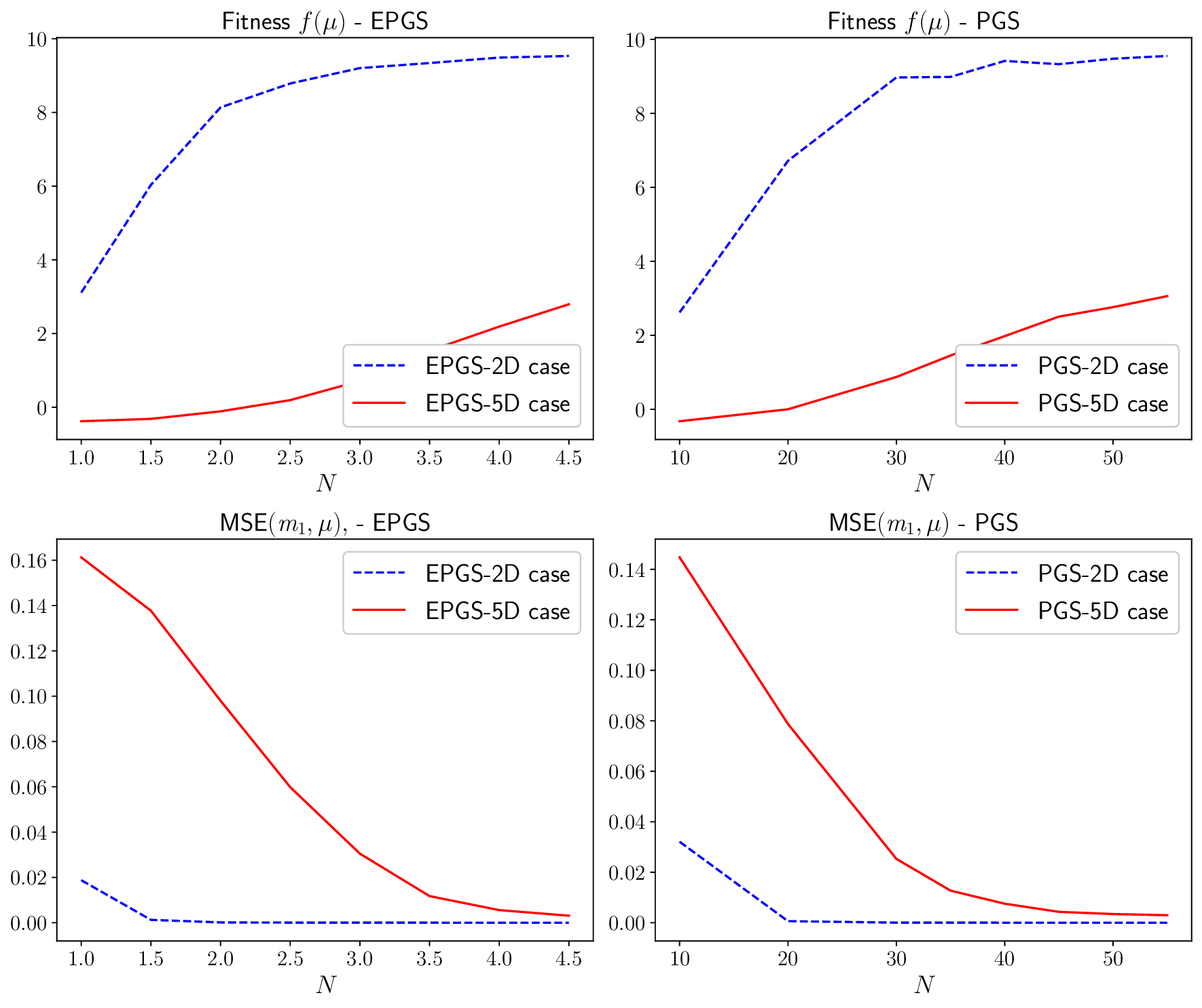}
    \caption{Effects of Increasing $N$. For each $N$, we perform the algorithm 100 times and obtain $\{\bm{\mu}_k\}_{k=1}^{100}$. The average fitness $\sum_{k=1}^{100} f(\bm{\mu}_k)/100$ and $\sum_{k=1}^{100} \text{MSE}(\bm{m}_1, \bm{\mu}_k)/100$ are plotted, where MSE$(\bm{m}_1, \bm{\mu}_k):=\sum_{i=1}^d(\mu_{ki}+0.5)^2/d$, $\sigma=1.0$, and $f$ is defined in \ref{objective2log}. Note that $\bm{x}^*=\bm{m}_1$ has all its entries equal to $-0.5$.}
    \label{increase-N}
\end{figure}

\subsection{Performance on Benchmark Objective Functions}
In this subsection, we test the performance of PGS and EGS on solving (\ref{objective}), with the objective $f$ being either of the two popular benchmark objective functions, the Ackley and the Rosenbrock (max-version). The performances of other popular global algorithms (max-version) are also reported for comparison, including 
\begin{itemize}
\item a standard homotopy method STD-Homotopy (see our appendix for details);
\item two zeroth-order single-loop Gaussian homotopy algorithms (determinsitic version\footnote{A deterministic version of SLGH is for solving the optimization problem of $\max_{\bm{x}}f(\bm{x})$, with $f$ being a deterministic function.}), ZO-SLGHd and ZO-SLGHr (\cite[Algorithm 3]{Iwakiri2022}); 
\item the gradient-ascent version of the zeroth-order algorithm of ZO-SGD (\cite[Equation (1)]{Chen2019zo} and \cite[Section 2.1]{ghadimi2013}) and ZO-AdaMM(\cite[Algorithm 1]{Chen2019zo})\footnote{ZO-SGD and ZO-AdaMM were originally designed to solves (\ref{homotopy-obj}). We take their solutions to (\ref{homotopy-obj}) as solutions to (\ref{objective}), and treat the scaling parameter in (\ref{homotopy-obj}) as a hyper-parameter.}, which were also used for comparisons in \cite{Iwakiri2022}; 
\item as well as the evolutionary algorithm of (5) particle swarm optimization (PSO, e.g., \cite[Section 3.1.5]{locatelli2013} and \cite{pyswarmsJOSS2018}). 
\end{itemize}
The details on STD-Homotopy and ZO-SGD, as well as the selected hyper-parameter values of all algorithms, can be found in our appendix. More details can be found in our codes at \url{http://github.com/chen-research/GS-PowerTransform}.

\subsubsection{Ackley}
The Ackley objective function features with a numerous number of local optimums and a single global optimum. We solve the max-version of the corresponding problem, which is 
$$\max_{(x,y)\in \mathbb{R}^2} f(x,y) := 20 e^{-0.2\sqrt{0.5(x^2+y^2)}}+e^{0.5(\cos(2\pi x)+\cos(2\pi y))}. $$
The graph of this function is plotted in Figure \ref{low-dim-obj}(a). From both the functional form and the graph, it is not difficult to see that $f(x,y)$ attains its maximum at $(0,0)$.

\begin{figure}[H]
\begin{tabular}{cc}
	\centering
        \subfloat[Ackley Function (Max-Version)]{\includegraphics[scale=0.38]{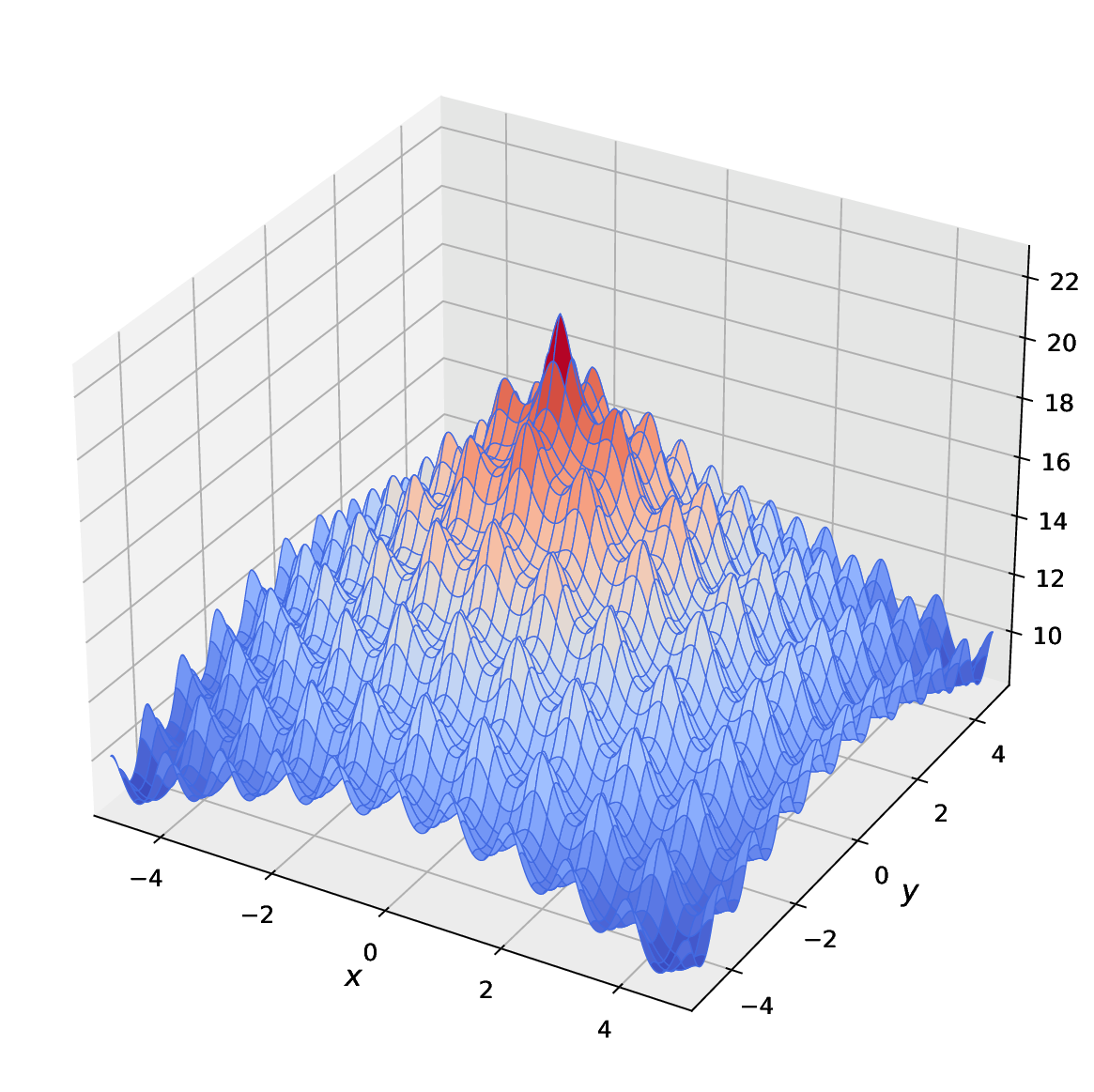} }
        &
        \subfloat[Rosenbrock Function (Max-Version).]{\includegraphics[scale=0.38]{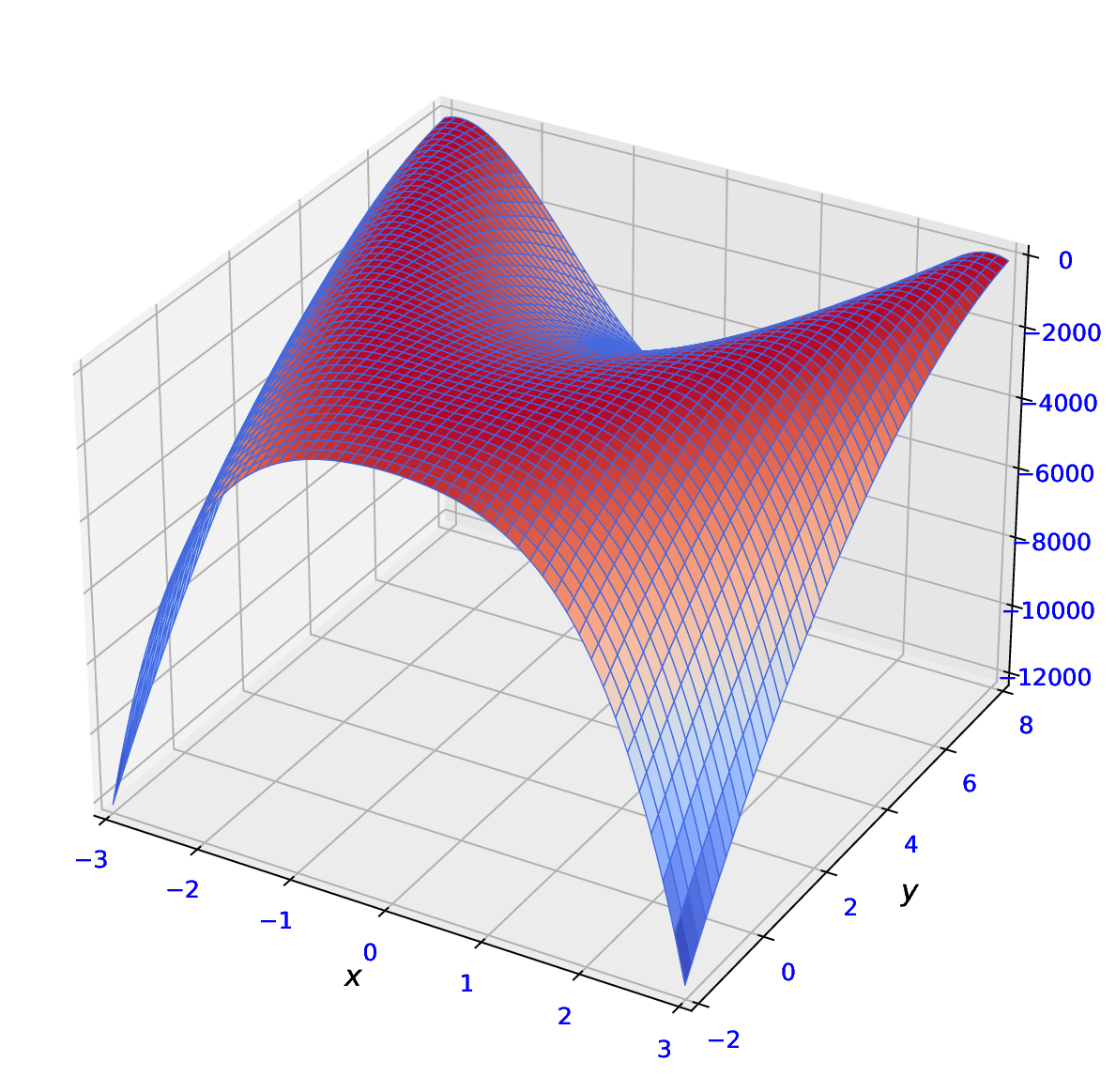} }
\end{tabular}
       	\caption{Graph of objective functions.}
\label{low-dim-obj}
\end{figure}

The solutions and their fitness values found by each of the compared algorithms are reported in Table \ref{Ackley}. From which we see that all of these algorithms are able to avoid the local maximum points and achieve the global maximum point.

\begin{table}[!htb]
\caption{Performances on Maximizing Ackley. The limit of the number of iterations is set as 200, and 100 samples are used for each solution update. The hyper-parameters are selected by trials. For each set of hyper-parameter candidates, we perform 100 experiments and \textit{take the average} to stabilize the results (all the reported numbers are averages). The initial solution value for each experiment is drawn from a multivariate Gaussian with mean $\bm{\mu}_0$ and covariance $0.01 I_2$, where $\bm{\mu}_0:=[5.0,5.0]$, except for PSO2 (for which cov$=2 I_2$). In the table, ``Iterations Taken" refers to the number of iterations taken to reach the best found solution.  All numbers are rounded to keep at most 3 decimal places. The global maximum point of the Ackley function is (0,0).}
\label{Ackley}
\centering%
\begin{tabular}{ p{4cm}  p{2.5cm}  p{5cm} p{1cm} }
\midrule
 & Iterations Taken & Best Solution Found $(\bm{\mu}^*)$  & $f(\bm{\mu}^*)$ \\
\toprule
EPGS ($N=1$) & $146$.   &  $(-0.001, -0.001)$          & 22.682\\

PGS ($N=20$)   & 141       &   $(0.001,0.002)$      &  22.678\\

STD-Homotopy   & 194       &   $(0.945,0.901)$      & 17.627\\

ZO-SLGHd   & 123       &   $(-0.003,-0.001)$.      &  22.61\\

ZO-SLGHr   & 131       &   $(0.001,-0.002)$.      &  22.621\\
ZO-AdaMM   &158       &   $(0.005, 0.001)$.      &  22.613\\
ZO-SGD   & 174       &   $(0.005, 0.007)$.      &  22.596\\
PSO1   & 174       &   $(4.425, 4.465)$.      &  10.974\\
PSO2   & 195       &   $(0.0, 0.05)$.      &  22.617\\
\bottomrule
\end{tabular}
\footnotetext{Total number of generations: 1000. $\bm{m}_1=[-.5,-.5]$. $\bm{m}_2=[.5,.5]$.}
\footnotetext{\textbf{Notation.} $\mathcal{P}_0$ denotes the initial population. For any real random vector $\bm{z}$ and any two real scalar $a<b$, $\bm{z} \sim$ uniform$[a, b]$ denotes that each entry of $\bm{z}$ is sampled uniformly from the interval $[a, b]$.}
\end{table}

\subsubsection{Rosenbrock}
The Rosenbrock objective is known to be difficult to optimize, since its global optimum point $\bm{x}^*=(1.0,1.0)$ is surrounded by a flat curved plane (see Figure). Specifically, the problem to be solved is $\max_{(x,y)\in \mathbb{R}^2} f(x,y)$, where
$$f(x,y) = -100(y-x^2)^2-(1-x)^2.$$

We use PGS, EPGS, and other algorithms to solve $\max_{\bm{x}} f(\bm{x})$. Their performances are recorded in Table \ref{Rosenbrock}, which shows that EPGS, STD-Homotopy, and PSO are superior than other algorithms on this task, since they are able to locate the true solution of (1,1).


\begin{table}[!htb]
\caption{Performances on Maximizing Rosenbrock. The limit of the number of iterations is set as 1000, and 100 samples are used for each solution update. The hyper-parameters are selected by trials. For each set of hyper-parameter candidates, we perform 100 experiments and \textit{take the average} to stabilize the results (all the reported numbers are averages). The initial solution value for each experiment is drawn from a multivariate Gaussian with mean $\bm{\mu}_0$ and covariance $0.01 I_2$, where $\bm{\mu}_0:=[-3.0,2.0]$. In the table, ``Iterations Taken" refers to the number of iterations taken to reach the best found solution.  For PGS, the Rosenbrock is added by $20,000$ to ensure the search agent only encounter positive values. All numbers are rounded to keep at most 3 decimal places. The global maximum point of the Rosenbrock function is (1,1).}
\label{Rosenbrock}
\centering%
\begin{tabular}{ p{4cm}  p{2.5cm}  p{5cm} p{1.5cm} }
\midrule
 & Iterations Taken & Best Solution Found $(\bm{\mu}^*)$  & $f(\bm{\mu}^*)$ \\
\toprule
EPGS ($N=1$) & $456$.   &  $(0.994, 1.002)$          & -0.18\\

PGS ($N=1$)  & 513       &   $(0.773,1.025)$      &  -22.8408\\

STD-Homotopy   & 624       &   $(0.903,0.885)$      &  -2.401\\

ZO-SLGHd   & 471       &   $(-0.447,1.991)$.      &  -137.016\\

ZO-SLGHr   & 148       &   $(0.105,0.938)$.      &  -88.477\\
ZO-AdaMM   & 1736       &   $(0.004, 0.618)$.      &  -39.206\\
ZO-SGD   & 45       &   $(0.272, 1.173)$.      &  -121.14\\
PSO   & 374       &   $(1.0, 1.0)$.      &  0.0\\
\bottomrule
\end{tabular}
\footnotetext{Total number of generations: 1000. $\bm{m}_1=[-.5,-.5]$. $\bm{m}_2=[.5,.5]$.}
\footnotetext{\textbf{Notation.} $\mathcal{P}_0$ denotes the initial population. For any real random vector $\bm{z}$ and any two real scalar $a<b$, $\bm{z} \sim$ uniform$[a, b]$ denotes that each entry of $\bm{z}$ is sampled uniformly from the interval $[a, b]$.}
\end{table}

\subsubsection{Conclusions on Algorithm Performances for Benchmark Objectives}
For Ackley, all the algorithms are able to locate the true solution of (1,1), except PSO when the initial population is concentrated near the initial start of $\bm{\mu}_0$, which indicates that the performance of PSO depends more on the initial guess than other methods. For Rosenbrock, EPGS outperforms other algorithms except PSO (which is quite different since it does not apply smoothing).

\subsection{Performance on the Black-box Targeted Adversarial Attack}
\label{image-attack}
Let $\mathcal{C}$ be an black-box\footnote{A black-box classifier refers to a classification model whose parameters are not accessible.} image classifier. The targeted adversarial attack on $\mathcal{C}$ refers to the task of modifying the pixels of a given image $\bm{a}$ so that $\mathcal{C}(\bm{a+x})$ is equal to a pre-specified target label $\mathcal{T}$. We perform attacks in the most difficult case - the target label $\mathcal{T}$ is pre-selected to be the one with the smallest predicted probability. Another goal of this task is to minimize the perturbation size $\|\bm{x}\|$. Hence, we set the loss as
$$ L(\bm{x}):= \max( \max_{i\neq\mathcal{T}} \mathcal{C}(\bm{a+x})_i -  \mathcal{C}(\bm{a+x})_{\mathcal{T}}, \; \kappa) + \lambda \|\bm{x}\|,$$
where $\mathcal{C}(\bm{a+x})_i$ denotes the predicted logit (i.e., log probability) for the $i^{th}$ class, $\kappa$ is a hyper-parameter that controls the certainty level of the attack, $\lambda$ is a regularization coefficient (we set $\lambda=1$ in our experiments), $\mathcal{T}:=\arg\min_{i} C(\bm{a})_i$, and $\|\bm{x}\|$ denotes the $L_1$ norm of $\bm{x}$ (i.e., the square root of the sum of squares of entries in $\bm{x}$). This loss function resembles the popular one designed in \cite{CarliniWagner2017}.

With EPGS and other compared algorithms, we perform adversarial attacks on 100 randomly selected images from each of two image datasets, the set of MNIST figures (\cite{mnist1998}) and the CIFAR-10 set (\cite{krizhevsky2009}). Specifically, the goal is to solve:
$$ \max_{\bm{x}\in\mathbb{R}^d}f(\bm{x}):= -L(\bm{x}),$$
where $d$ is the total number of pixels in each image. For Figure-MNIST images, $d=28\times28$, and for CIFAR-10 images, $d=32\times32\times 3$. We call $f$ as the fitness function.

The classifier is a robust convolutional neural network (CNN) trained using the technique of defensive distillation\footnote{We borrow the code by \cite{CarliniWagner2017} for training the classifier from \url{https://github.com/carlini/nn_robust_attacks}, which applies TensorFlow (\cite{tensorflow2015}) for model training. We changed the layer structure in the neural network to either increase accuracy or to decrease computation complexity.}. The distillation temperature is set at 100, which leads to a high level of robustness (\cite{CarliniWagner2017}). In the Figure-MNIST attacks, our trained classifier $\mathcal{C}$ has a classification accuracy of $97.4\%$ on the testing images. In the CIFAR-10 attacks, the trained $\mathcal{C}$ has a test accuracy of $87.0\%$.

The hyper-parameters of all the tested algorithms are selected by trials and can be found in Appendix \ref{Hyper-parameters}. For ZO-SLGHd, ZO-SLGHr, and ZO-AdaMM, the hyper-parameters selected in \cite[Appendix E.1]{Iwakiri2022} are included in our candidate set, since they performed similar tasks. 

We choose EPGS over PGS for this task since the fitness function $f(\bm{x})$ can be negative, which makes EPGS more convenient to apply. (But note that we can modify $f(\bm{x})$ by adding a large positive constant to facilitate PGS).

\subsubsection{MNIST}
\label{mnist_description}
For each image $\bm{a}_m$ that is randomly drawn from the testing dataset, where $m\in\{1,2,...,100\}$, and each algorithm, we perform an attack (i.e., experiment) of $T_{total}$ iterations. Let $\{\bm{\mu}_{m,t}\}_{t=0}^{T_{total}-1}$ denote all the perturbations (solutions) produced in these $T_{total}$ iterations. We say that a perturbation $\bm{\mu}$ is successful if the predicted log probability of the target label is at least $\kappa=0.01$ greater than that of other classes (i.e., $\mathcal{C}(\bm{a+\mu})_{\mathcal{T}}- \max_{i\neq\mathcal{T}} \mathcal{C}(\bm{a+\mu})_i >\kappa$). We say that an attack is successful if the produced $\{\bm{\mu}_{m,t}\}_{t=0}^{T_{total}-1}$ contains at least one successful perturbation. If the attack is successful, let $\bm{\mu}^*_m$ denote the successful perturbation with the largest $R^2$-value among $\{\bm{\mu}_{m,t}\}_{t=0}^{T_{total}-1}$, and let $T_m$ denote the number of iterations taken by the algorithm to produce $\bm{\mu}_m^*$. Here, the $R^2$-value of $\bm{\mu}$ refers to the $R^2$ statistic between $\bm{a}$ and the perturbed image $\bm{a}+\bm{\mu}$, which is computed as $\frac{\sum_{i=1}^d(a_i-\bar{a})^2}{\sum_{i=1}^d\mu_i^2}$. In this formula, $a_i$ and $\mu_i$ ranges over all the pixels (entries) of $\bm{a}$ and $\bm{\mu}$. 

With the above notations, we construct three measures on the performances of an algorithm. One is the success rate, which refers to the ratio of successful image attacks out of the total number of attacks (100). The second measure is the average $R^2$, which equals $\bar{R}^2:=\sum_{m\in\mathbb{S}} R^2(\bm{a}_m,\bm{a}_m+\bm{\mu}_m^*)/|\mathbb{S}|$, where $\mathbb{S}$ denotes the set of indices of the successful attacks. The last measure is the average $\bar{T}$ of $\{T_m\}_{m\in\mathbb{S}}$.

The results are reported in Table \ref{mnist}, from which we see that EPGS has a significantly higher $\bar{R}^2$-score (i.e., $85\%$) than other algorithms, indicating that the perturbed image is closest to the original one. Also, the average number of iterations taken by EPGS to reach the optimal perturbation is 438, which is among the two fastest algorithms. 

\begin{table}[!htb]
\caption{Targeted Adversarial Attack on 100 MNIST hand-written figures (per-image). For each image attack, the initial perturbation is set as $\bm{\mu}_0:=\bm{0}$. For each image, 1,500 iterations of are performed by each algorithm. The success rate is the portion of successful attacks out of the 100 attacks. $\bar{R}^2$ is the average of $\{R^2(\bm{a}_m,\bm{a}_m+\bm{\mu}_m^*)\}_{m\in\mathbb{S}}$, where $\bm{a}_m$ denotes the $m^{th}$ original image and $\bm{\mu}_m^*$ denotes the optimal perturbation found in the $m^{th}$ image attack, and $\mathbb{S}$ denotes the set of indices of the successful attacks. $\bar{T}$ denotes the average number of iterations taken to the optimal perturbation. The numbers in the parentheses are sample standard deviations.}
\label{mnist}
\centering%
\begin{tabular}{ >{\centering\arraybackslash} p{3cm} | >{\centering\arraybackslash}p{3cm} >{\centering\arraybackslash}p{3cm} >{\centering\arraybackslash}p{3cm} }  
\midrule
 & Success Rate & $\bar{R}^2$ & $\bar{T}$ \\
\toprule
EPGS ($N=.02$)                & $100\%$.      &  $.85\, (.05)$                 & 438\,(120)  \\
ZO-SLGHd                      &$100\%$        &  $.54\,(.14)$                   &1491\,(43)       \\
ZO-SLGHr                       &$100\%$       &  $.36\,(.26)$                    & 522\,(688)       \\
ZO-SGD                          &$100\%$       &$-3.30\,(1.67)$                & 2997\,(15)  \\
ZO-AdaMM                     &$100\%$       &$0.24\,(.26)$                &50\,(25)    \\
STD-Homotopy              &$86\%$              &$-.05$\,(.36)                &559\,(303)                                              \\
\bottomrule
\end{tabular}
\end{table}

\subsubsection{CIFAR-10}
We perform per-image targeted adversarial attacks on 100 randomly drawn images from the testing set. The experiments are performed in the same way as that for Figure-MNIST, and their results are reported in Table \ref{cifar}. These results are in general better than those in the Figure-MNIST attacks, which we believe is because of the lower accuracy of the CIFAR-10 CNN.

The results show that EPGS produces a success rate of $100\%$ and scores the second with respect to $\bar{R}^2$. Specifically, the $\bar{R}^2$-value of $98\%$ indicates that the perturbed image produced by EPGS is very close to the original image ($\bar{R}^2=100\%$ implies that the two are identical), which is consistent with our goal that the perturbation size $\|\bm{\mu}_*\|$ should be small.

In this experiment, among the algorithms that attain a $100\%$ success rate, with an $\bar{R}^2$ score of $99\%$, ZO-SLGHd outperforms other algorithms. EPGS's performance of $\bar{R}^2=98\%$ is very close to that of ZO-SLGHd.

\subsection{Summary on Experiment Results}
\label{ExperimentSummary}
The experiments show that PGS and EPGS ranked among the tops in all the tasks they performed. In the tasks of Ackley and Rosenbrock (Table \ref{Rosenbrock} and \ref{mnist}), EPGS has the highest fitness value $f(\bm{\mu}^*)$ than other algorithms that also apply smoothing techniques. For Figure-MNIST attacks (Table \ref{mnist}), EPGS has a significantly higher $\bar{R}^2$ score. For CIFAR-10 attacks (Table \ref{cifar}), EPGS's $\bar{R}^2$ score is very close to that of the best (ZO-SLGHd). In sum, EPGS outperforms other algorithms that also apply smoothing techniques in most of our experiments. 
 
\begin{table}[!htb]
\caption{Targeted Adversarial Attack on 100 CIFAR-10 images (per-image). For each image, $1,500$ iterations of are performed by each algorithm. The initial perturbation is set as $\bm{\mu}_0:=\bm{0}$. In the table, ``Succ. Rate", $\bar{R}^2$ and $\bar{T}$ are defined in the same way as in Table \ref{mnist}. All numbers are rounded to keep 3 decimal places.}
\label{cifar}
\centering%
\begin{tabular}{ >{\centering\arraybackslash} p{3cm} | >{\centering\arraybackslash}p{3cm} >{\centering\arraybackslash}p{3cm} >{\centering\arraybackslash}p{3cm} }  
\midrule
 & Success Rate & $\bar{R}^2$ & $\bar{T}$ \\
\toprule
EPGS ($N=0.03$)                & $100\%$.      &  $.98\, (.02)$                 & 686\,(277)  \\
ZO-SLGHd                      &$100\%$        &  $.99\,(.01)$                   &1288\,(429)       \\
ZO-SLGHr                       &$100\%$       &  $.98\,(.02)$                    & 402\, (307)       \\
ZO-SGD                          &$69\%$       &  $.99\,(.00)$                          & 680\,$(370)$          \\
ZO-AdaMM                     &$100\%$       & $.66 \,(.27)$                              & 64$(149)$         \\
STD-Homotopy              &$69\%$              &$.86\,(.13)$                           & 479\,(349)                     \\
\bottomrule
\end{tabular}
\end{table}

\section{Conclusion}
In this paper, we propose a novel smoothing method, GSPTO, for solving the global optimization problem of (\ref{objective}), which is featured with putting more weight on the objective's global optimum through power transformations. Both our theoretical analysis and numerical experiments show that GSPTO outperforms other optimization algorithms that also apply smoothing techniques. This method provides a foundation for future studies to explore more efficient ways to increase the gap between $f(\bm{x}^*)$ and other values before smoothing.

\bibliography{Literature}
\bibliographystyle{abbrv}

\section{Appendix}

\subsection{Proof to Theorem \ref{Thm1} for EPGS}
\begin{proof}
Recall that for EPGS, $f_N(\bm{x}_k):=
\left\{
\begin{array}{ll}
e^{Nf(\bm{x}_k)}, &\bm{x}\in\mathcal{S};\\
0, &\text{otherwise}.\\
\end{array}
\right.$
For any given $\delta>0$, define $V_\delta:=\sup_{\bm{x}: \lVert \bm{x} - \bm{x}^*\rVert \geq \delta}f(\bm{x})$ and $D_\delta:=(V_\delta+f(\bm{x}^*))/2$. Using this symbol, we re-write $F_N(\bm{\mu},\sigma)$ as
\begin{equation}
\label{F-decompose}
F_N(\bm{\mu},\sigma) = e^{D_\delta N} G_N(\bm{\mu},\sigma) = e^{D_\delta N}(H_N(\bm{\mu},\sigma) + R_N(\bm{\mu},\sigma)),
\end{equation}
where
\begin{equation*}
\begin{split}
\label{theorem-notations}
G_N(\bm{\mu},\sigma):= & (\sqrt{2\pi}\sigma)^{-d} \int_{\bm{x}\in \mathbb{R}^d} e^{-ND_\delta} f_N(\bm{x}) e^{-\frac{\lVert \bm{x} - \bm{\mu} \rVert^2}{2\sigma^2}} d\bm{x}, \\
H_N(\bm{\mu},\sigma):= & (\sqrt{2\pi}\sigma)^{-d}  \int_{\bm{x}\in B(\bm{x}^*;\delta)} e^{-ND_\delta} f_N(\bm{x})  e^{-\frac{\lVert \bm{x} - \bm{\mu} \rVert^2}{2\sigma^2}} d\bm{x},\\
R_N(\bm{\mu},\sigma):= &  (\sqrt{2\pi}\sigma)^{-d} \int_{\bm{x}\notin B(\bm{x}^*;\delta)} e^{-ND_\delta} f_N(\bm{x})  e^{-\frac{\lVert \bm{x} - \bm{\mu} \rVert^2}{2\sigma^2}} d\bm{x},\\
\end{split}
\end{equation*}
where $B(\bm{x}^*;\delta):=\{\bm{x}\in\mathbb{R}^d: \|\bm{x}-\bm{x}^*\|<\delta \}$.

We derive an upper bound for $\left|\frac{\partial R_N(\bm{\mu},\sigma)}{\partial\mu_i}\right|$. For any $\bm{\mu}\in \mathbb{R}^d$,
\begin{equation}
\begin{split}
\label{R-bound}
\left| \frac{\partial R_N(\bm{\mu},\sigma)}{\partial\mu_i}\right| &\leq \frac{1}{(\sqrt{2\pi})^d\sigma^{d+2}}\int_{\bm{x}\notin B(\bm{x}^*;\delta)} |x_i - \mu_i| e^{-\frac{\lVert \bm{x} - \bm{\mu} \rVert^2}{2\sigma^2}} e^{-ND_\delta)}f_N(\bm{x}) d\bm{x} \\
&\leq \frac{1}{(\sqrt{2\pi})^d\sigma^{d+2}}\int_{\bm{x}\notin B(\bm{x}^*;\delta)} |x_i - \mu_i| e^{-\frac{\lVert \bm{x} - \bm{\mu} \rVert^2}{2\sigma^2}} e^{N(f(\bm{x})-D_\delta)} d\bm{x} \\
&\leq \frac{1}{(\sqrt{2\pi})^d\sigma^{d+2}}\int_{\bm{x}\notin B(\bm{x}^*;\delta)}  |x_i - \mu_i| e^{-\frac{\lVert \bm{x} - \bm{\mu} \rVert^2}{2\sigma^2}}  e^{N(V_\delta-D_\delta)}  d\bm{x} \\
&\leq  \frac{e^{N(V_\delta-D_\delta)}}{(\sqrt{2\pi})^d\sigma^{d+2}}\int_{\bm{x}\in \mathbb{R}^d}  |x_i - \mu_i| e^{-\frac{\lVert \bm{x} - \bm{\mu} \rVert^2}{2\sigma^2}}  d\bm{x} \\
&\leq e^{N(V_\delta-D_\delta)} \left( \Pi_{j\neq i}  \frac{1}{\sqrt{2\pi}\sigma} \int_{x_j\in\mathbb{R}} 
e^{-\frac{(x_j - \mu_j )^2}{2\sigma^2}}  dx_j \right)\\
&\qquad\qquad\quad  \cdot\frac{1}{\sqrt{2\pi}\sigma^{3}}\int_{x_i\in \mathbb{R}}  |x_i - \mu_i| e^{-\frac{(x_i - \mu_i)^2}{2\sigma^2}}  dx_i\\
&=e^{N(V_\delta-D_\delta)} \frac{1}{\sqrt{2\pi}\sigma^{3}}\int_{y\in \mathbb{R}} \sqrt{2}\sigma|y| e^{-y^2}  d(\sqrt{2}\sigma y), \quad y:=\frac{x_i-\mu_i}{\sqrt{2}\sigma},\\
&=e^{N(V_\delta-D_\delta)} \frac{\sqrt{2}}{\sqrt{\pi}\sigma}\cdot 2\int_{0}^{\infty}y e^{-y^2}  dy, \quad y:=\frac{x_i-\mu_i}{\sqrt{\pi}\sigma},\\
&=e^{N(V_\delta-D_\delta)} \frac{\sqrt{2}}{\sqrt{\pi}\sigma}\cdot \int_{0}^{\infty}  e^{-y^2}  d y^2,\\
&=e^{N(V_\delta-D_\delta)} \frac{\sqrt{2}}{\sqrt{\pi}\sigma}\cdot \int_{0}^{\infty}  e^{-z}  d z,\\
&=\frac{\sqrt{2}e^{N(V_\delta-D_\delta)}}{\sqrt{\pi}\sigma}
\end{split}
\end{equation}
where the third line is because $\lVert \bm{x}-\bm{x}^* \rVert \geq \delta \Rightarrow f(\bm{x})\leq V_\delta$, and the fifth line is by the separability of a multivariate integral.

Since $f$ is continuous, for $\epsilon_\delta:= f(\bm{x}^*)-D_\delta>0$ (because $V_\delta<f(\bm{x}^*)$), there exists $\delta'\in(0,\delta)$ such that whenever $\lVert \bm{x}-\bm{x}^* \rVert\leq \delta'$, 
\begin{equation}
\label{f-delta}
f(\bm{x})\geq f(\bm{x}^*)-\epsilon_\delta = D_\delta > V_\delta.
\end{equation}
Using this result, we derive a lower bound for $\left|\frac{\partial H_N(\bm{\mu},\sigma)}{\partial\mu_i}\right|$ when $\|\bm{\mu}\|\leq M$ and $|\mu_i-x_i^*|>\delta$. 
\begin{equation}
\begin{split}
\label{H-bound}
\left| \frac{\partial H_N(\bm{\mu},\sigma)}{\partial\mu_i} \right|
&= \frac{1}{(\sqrt{2\pi})^d\sigma^{d+2}}\int_{\bm{x}\in B(\bm{x}^*;\delta)} |x_i - \mu_i| e^{-ND_\delta} f_N(\bm{x})   e^{-\frac{\lVert \bm{x} - \bm{\mu} \rVert^2}{2\sigma^2}} d\bm{x} \\
&= \frac{1}{(\sqrt{2\pi})^d\sigma^{d+2}}\int_{\bm{x}\in B(\bm{x}^*;\delta)} |x_i - \mu_i| e^{-\frac{\lVert \bm{x} - \bm{\mu} \rVert^2}{2\sigma^2}}    e^{N(f(\bm{x})-D_\delta)}      d\bm{x},\text{ since } B(\bm{x}^*;\delta)\subset \mathcal{S},\\
&\geq  \frac{1}{(\sqrt{2\pi})^d\sigma^{d+2}}\int_{\bm{x}\in B(\bm{x}^*;\delta')} |x_i - \mu_i| e^{-\frac{\lVert \bm{x} - \bm{\mu} \rVert^2}{2\sigma^2}} e^{N(f(\bm{x})-D_\delta)}  d\bm{x} \\
&\geq^{\text{by } (\ref{f-delta})}  \frac{1}{(\sqrt{2\pi})^d\sigma^{d+2}}\int_{\bm{x}\in B(\bm{x}^*;\delta')} (\delta-\delta') e^{-\frac{\lVert \bm{x} - \bm{\mu} \rVert^2}{2\sigma^2}}  d\bm{x} \\
&\geq   \frac{1}{(\sqrt{2\pi})^d\sigma^{d+2}}\int_{\bm{x}\in B(\bm{x}^*;\delta')} (\delta-\delta')e^{-\frac{M^2}{\sigma^2}}e^{-\frac{\|\bm{x}\|^2}{\sigma^2}} d\bm{x},\quad\|  \bm{x} - \bm{\mu}\|^2\leq 2(\| \bm{x} \|^2+ \|\bm{\mu}\|^2),\\
&\geq  (\delta-\delta') e^{-\frac{M^2}{\sigma^2}}  V(\delta',d,\sigma) 
\end{split}
\end{equation}
where the first equality is implied by the fact that $x_i-\mu_i$ does not change sign as $\bm{x}$ travels in $B(\bm{x}^*;\delta)$ (this fact is because of $|\mu_i-x_i^*|>\delta$), and
 $$V(\delta',d,\sigma)=\frac{1}{(\sqrt{2\pi})^d\sigma^{d+2}} \int_{\bm{x}\in B(\bm{x}^*;\delta')} e^{-\frac{\|\bm{x}\|^2}{\sigma^2}} d\bm{x}.$$ 

The positive number $N_{\delta,\sigma,M}$ is constructed by solving the following inequality for $N$, which involves the two bounds in (\ref{R-bound}) and (\ref{H-bound}).
\begin{equation*}
\begin{split}
\frac{\sqrt{2}e^{N(V_\delta-D_\delta)}}{\sqrt{\pi}\sigma}< (\delta-\delta') e^{-\frac{M^2}{\sigma^2}} V(\delta',d,\sigma).
\end{split}
\end{equation*}
The solution of this inequality is
$$ N >\frac{\ln\left(\frac{\sqrt{\pi}}{\sqrt{2}}(\delta-\delta') e^{-\frac{M^2}{\sigma^2}}V(\delta',d,\sigma) \right)  }{V_\delta-D_\delta}, $$
where $V_\delta-D_\delta<0$ and the numerator is negative for sufficiently large $M>0$.
Therefore, whenever
$$N>N_{\delta,\sigma,M}:=\max\left\{0, \frac{\ln\left(\frac{\sqrt{\pi}}{\sqrt{2}}(\delta-\delta') e^{-\frac{M^2}{\sigma^2}}V(\delta',d,\sigma) \right)  }{V_\delta-D_\delta}\right\},$$
we have 
\begin{equation}
\label{H>R}
\left| \frac{\partial R_N(\bm{\mu},\sigma)}{\partial\mu_i}\right| < \frac{\sqrt{2}e^{N(V_\delta-D_\delta)}}{\sqrt{\pi}\sigma}< (\delta-\delta') e^{-\frac{M^2}{\sigma^2}} V(\delta',d,\sigma)< \left| \frac{\partial H_N(\bm{\mu},\sigma)}{\partial\mu_i} \right|.
\end{equation}
When $N>N_{\delta,\sigma,M}$, $\|\bm{\mu}\|\leq M$, and $\mu_i>x_i^*+\delta$,
\begin{equation}
\begin{split}
\label{g-bound1}
\frac{\partial G_N(\bm{\mu},\sigma)}{\partial\mu_i} &= \frac{\partial H_N(\bm{\mu},\sigma)}{\partial\mu_i}+ \frac{\partial R_N(\bm{\mu},\sigma)}{\partial\mu_i} \\
&=\frac{1}{(\sqrt{2\pi})^k\sigma^{k+2}}\int_{\bm{x}\in B(\bm{x}^*;\delta)} (x_i - \mu_i) e^{-\frac{\lVert \bm{x} - \bm{\mu} \rVert^2}{2\sigma^2}}    e^{N(f(\bm{x})-D_\delta)}      d\bm{x} + \frac{\partial R_N(\bm{\mu},\sigma)}{\partial\mu_i} \\
&= -\left|\frac{\partial H_N(\bm{\mu},\sigma)}{\partial\mu_i}\right| + \frac{\partial R_N(\bm{\mu},\sigma)}{\partial\mu_i} \\
&<^{\text{by }(\ref{H>R})} -\left|\frac{\partial R_N(\bm{\mu},\sigma)}{\partial\mu_i}\right| + \left|\frac{\partial R_N(\bm{\mu},\sigma)}{\partial\mu_i} \right|\\
&= 0,
\end{split}
\end{equation}
where the third line is because the integrand of the first term is always negative in the integration region. 

On the other hand, when $N>N_{\delta,\sigma,M}$, $\|\bm{\mu}\|\leq M$, and $\mu_i<x_i^*-\delta$,
\begin{equation}
\begin{split}
\label{g-bound2}
\frac{\partial G_N(\bm{\mu},\sigma)}{\partial\mu_i} =& \frac{\partial H_N(\bm{\mu},\sigma)}{\partial\mu_i}+ \frac{\partial R_N(\bm{\mu},\sigma)}{\partial\mu_i} \\
&= \left|\frac{\partial H_N(\bm{\mu},\sigma)}{\partial\mu_i}\right| + \frac{\partial R_N(\bm{\mu},\sigma)}{\partial\mu_i} \\
&>^{\text{by }(\ref{H>R})} \left|\frac{\partial R_N(\bm{\mu},\sigma)}{\partial\mu_i}\right| - \left|\frac{\partial R_N(\bm{\mu},\sigma)}{\partial\mu_i} \right|\\
&= 0.
\end{split}
\end{equation}
Then, (\ref{g-bound1}) and (\ref{g-bound2}) imply the result in the theorem since $\frac{\partial G_N(\bm{\mu},\sigma)}{\partial\mu_i}$ and $\frac{\partial F_N(\bm{\mu},\sigma)}{\partial\mu_i}$ share the same sign (see Eq. (\ref{F-decompose})). 
\end{proof}

\subsection{STD-Homotopy}
STD-Homotopy is a standard homotopy algorithm for optimization. It has a double-loop mechanism. The inner loop updates the solution $\bm{\mu}_t$ with a fixed scaling parameter $\sigma$ until no improvements of $f(\bm{\mu}_t)$ are made, and the outer loop decays $\sigma$ iteratively. In this algorithm, the term $\widehat{\nabla F}(\bm{\mu},\sigma)$ is an estimate of $\nabla_{\bm{\mu}} \mathbb{E}_{\bm{x}\sim\mathcal{N}(\bm{\mu},\sigma I_d)}[f(\bm{x})]= \mathbb{E}_{\bm{x}\sim\mathcal{N}(\bm{\mu},\sigma I_d)}[(\bm{x}-\bm{\mu})f(\bm{x})]$, which is used to update $\bm{\mu}$.

\begin{algorithm}[h]
  \caption{STD-Homotopy}\label{std-homotopy}
  \SetAlgoLined
  \textbf{Require}: {The maximum of iteration number $T_{total}>0$, the initial scaling parameter $\sigma>0$, the objective $f$, the initial value $\bm{\mu}_0$, the number $K$ of sampled points for gradient approximation, the maximum number $N_{\sigma}$ of times $\sigma$ gets updated,  the maximum $T_{\bm{\mu}}$ of the number of times $\bm{\mu}$ gets updated for each value of $\sigma$, the tolerance number $\tau$ for no improvements of $f(\bm{\mu}_t)$ for any fixed $\sigma$, the decay factor $\gamma\in(0,1)$, and the learning rate $\alpha_t>0$.}

  \textbf{Result}: {$\bm{\mu}_{t_1}$ - The approximated solution to (\ref{objective}).}
  
  $t_1=0$, $n_{\sigma}=0$.

  \While{$t_1\leq T_{total}$ and $n_{\sigma}<N_{\sigma}$} {
    $t=0$, $I=True$.    

    \While{$t<T_{\bm{\mu}}$ and $I==True$ and $t_1\leq T_{total}$} {
    Independently and uniformly sample $K$ points $\{\bm{v}_k\}_{k=1}^K$ from the uniform sphere in $\mathbb{R}^d$. Compute $\bm{x}_k:=\bm{\mu}_t + \sigma \bm{v}_k$, for each $k$.

    Compute the gradient estimate $ \widehat{\nabla F}(\bm{\mu}_t,\sigma)=\frac{1}{K}\sum_{k=1}^K (\bm{x}_k-\bm{\mu}_t) f(\bm{x}_k)$.\\
$ \bm{\mu}_{t+1} =\bm{\mu}_{t} + \alpha_t \widehat{\nabla F_N}(\bm{\mu}_t,\sigma)/\|\widehat{\nabla F_N,\sigma}(\bm{\mu}_t)\|$.

\If{$\max \{f(\bm{\mu}_{t+1}),f(\bm{\mu}_{t}),...,f(\bm{\mu}_{t-\tau+1})\}\leq f(\bm{\mu}_{t-\tau})$}{$I=False$.}

$t_1=t_1+1$, $t=t+1$.

}
$\sigma = \gamma \sigma$, $n_\sigma = n_\sigma+1.$
}
  Return($\bm{\mu}_{t_1}$).
\end{algorithm}

\subsection{ZO-SGD}
The zeroth-order stochastic gradient ascent (ZO-SGD) is a max-version of \cite[Equation (1)]{Chen2019zo}, whose gradient estimate method is from \cite{Nesterov2017}.

\begin{algorithm}[htb]
  \caption{ZO-SGD}\label{ZO-SGD}
  \SetAlgoLined
  \textbf{Require}: {The scaling parameter $\sigma>0$, the objective $f$, the initial value $\bm{\mu}_0\in\mathbb{R}^d$, the number $K$ of sampled points for gradient approximation, the total number $T$ of $\bm{\mu}$-updates, and the learning rate schedule $\{\alpha_t\}_{t=1}^T$.}

  \textbf{Result}: {$\bm{\mu}_T$ - The approximated solution to (\ref{objective}).}

  \For{t from 0 to T-1} {
     Independently and uniformly sample $K$ points $\{\bm{v}_k\}_{k=1}^K$ from the uniform sphere in $\mathbb{R}^d$.\\ 

Compute the gradient estimate 
$$ \widehat{\nabla F}(\bm{\mu}_t,\sigma)=\frac{1}{K}\sum_{k=1}^K \frac{(f(\bm{\mu}_t+\sigma\bm{v}_k)-f(\bm{\mu}_t))\bm{v}_kd}{\sigma}.$$

$ \bm{\mu}_{t+1} =\bm{\mu}_{t} + \alpha_t \widehat{\nabla F_N}(\bm{\mu}_t,\sigma)$.
}
  Return($\bm{\mu}_T$).
\end{algorithm}

\subsection{Hyper-parameters}
For experiments on the benchmark test functions (Table \ref{Ackley-Hyper-params} and \ref{Rosenbrock-Hyper-params}), the set of hyper-parameter values with the smallest mean square error (averaged over the 100 experiments) between the true and estimated solutions are selected. 

For the image attacks (Table \ref{MNIST-Hyper-params} and \ref{Cifar10-Hyper-params}), for each set of the hyper-parameter candidate values, we randomly choose 10 images to attack. The set with the highest average fitness value (average over the 10 image attacks) will be selected. 

\label{Hyper-parameters}
\begin{table}[!htb]
\caption{Hyper-parameters for Optimizing Ackley. The candidate set for learning rates is $\mathcal{L}:=\{.1,.001\}$. The candidate set for smoothing parameters is $\mathcal{S}:=\{.1,1.0,2.0\}$. $t_1$ in ZO-SLGHd and ZO-SLGHr is the initial scaling parameter. $\mu$ in ZO-AdaMM is the scaling parameter. The set of candidate values that lead to the minimum mean square error between the true solution and the found solution will be selected.}
\label{Ackley-Hyper-params}
\centering%
\begin{tabular}{ p{3cm}  p{4cm}  p{7cm} }
\midrule
 & Selected Values & Candidates $(\bm{\mu}^*)$ \\
\toprule
EPGS & $\alpha_0=.1$, $N=1$, $\sigma=1.0$.   &  $N\in\{1,2,3\}$, $\alpha_0\in\mathcal{L}$, $\sigma\in\mathcal{S}$.       \\

PGS   & $\alpha_0=.1$, $N=20$, $\sigma=1.0$.   &  $N\in\{5,10,20,30\}$, $\alpha_0\in\mathcal{L}$, $\sigma\in\mathcal{S}$.       \\

STD-Homotopy  & $\alpha=.1$, $\gamma=.8$, $\sigma=2.0$, $T_{\bm{\mu}}=500$, $\tau=100$, $N_\sigma=10$.   &  $\gamma\in\{.2,.5,.8\}$, $\alpha\in\mathcal{L}$, $\sigma\in\mathcal{S}$.     \\

ZO-SLGHd   & $\beta=.001$, $\eta=.001$, $t_1=2.0$, $\gamma=.99$       &   $\beta\in\{.1,.001 \}$, $\eta\in\{.1,.01,.001 \}$, $t_1\in\mathcal{S}$, $\gamma\in\{.99,.95\}$.    \\
ZO-SLGHr   & $\beta=.001$, $t_1=0.1$,  $\gamma=.995$      &   $\beta\in\mathcal{L}$, $\gamma\in\{.999,.995\}$, $t_1\in\mathcal{S}$. \\
ZO-AdaMM   &$\beta_{1}=.5$, $\beta_2=.5$, $\alpha=0.1$, $\mu=1.0$.       &   $\alpha\in\mathcal{L}$, $\beta_1\in\{.5,.7,.9\}$, $\beta_2\in\{.1,.3,.5\}$, $\mu\in\mathcal{S}$   \\
ZO-SGD   & $\alpha=.1$, $\sigma=1.0$.       &   $\alpha\in\mathcal{L}$, $\mu\in\mathcal{S}$.       \\
PSO1 and PSO2   & $c_1=.2$, $c_2=.8,$, $w=.8$.       &   $c_1,c_2,w\in\{.2,.5,.8\}$.     \\
\bottomrule
\end{tabular}
\footnotetext{Total number of generations: 1000. $\bm{m}_1=[-.5,-.5]$. $\bm{m}_2=[.5,.5]$.}
\footnotetext{\textbf{Notation.} $\mathcal{P}_0$ denotes the initial population. For any real random vector $\bm{z}$ and any two real scalar $a<b$, $\bm{z} \sim$ uniform$[a, b]$ denotes that each entry of $\bm{z}$ is sampled uniformly from the interval $[a, b]$.}
\end{table}

\begin{table}[!htb]
\caption{Hyper-parameters for Optimizing Rosenbrock. The candidate set for learning rates is $\mathcal{L}:=\{.2,.1,.01,.001,.0001\}$. The candidate set for smoothing parameters is $\mathcal{S}:=\{1.0,2.0\}$.  $t_1$ in ZO-SLGHd and ZO-SLGHr is the initial scaling parameter. $\mu$ in ZO-AdaMM is the scaling parameter. For EPGS and PGS, $\alpha_t = \frac{1,000\alpha_0}{1,000+t}$. The set of candidate values that lead to the minimum mean square error between the true solution and the found solution will be selected.}
\label{Rosenbrock-Hyper-params}
\centering%
\begin{tabular}{ p{3cm}  p{4cm}  p{7cm} }
\midrule
 & Selected Values & Candidates $(\bm{\mu}^*)$ \\
\toprule
EPGS & $\alpha_0=.2$, $N=1$, $\sigma=1.0$.   &  $N\in\{1,2,3\}$, $\alpha_0\in\mathcal{L}$, $\sigma\in\mathcal{S}$.       \\

PGS  & $\alpha_0=.1$, $N=1$, $\sigma=1.0$.   &  $N\in\{1,3,5\}$, $\alpha_0\in\mathcal{L}$, $\sigma\in\mathcal{S}$.       \\

STD-Homotopy  & $\alpha=.2$, $\gamma=.2$, $\sigma=2.0$,, $T_{\bm{\mu}}=500$, $\tau=100$, $N_\sigma=10$.   &  $\gamma\in\{.2,.5,.8\}$, $\alpha\in\mathcal{L}$, $\sigma\in\mathcal{S}$.     \\

ZO-SLGHd   & $\beta=.0001$, $\eta=.001$, $t_1=2.0$, $\gamma=.999$       &   $\beta\in\mathcal{L}$, $\eta\in\{.1,.01,.001 \}$, $t_1\in\mathcal{S}$, $\gamma\in\{.99,.995,.999\}$.    \\
ZO-SLGHr   & $\beta=.0001$, $t_1=2.0$,  $\gamma=.999$.      &   $\beta\in\mathcal{L}$, $\gamma\in\{.999,.995\}$, $t_1\in\mathcal{S}$. \\
ZO-AdaMM   &$\beta_{1}=.5$, $\beta_2=.5$, $\alpha=.2$, $\mu=2.0$.       &   $\alpha\in\mathcal{L}$, $\beta_1\in\{.5,.7,.9\}$, $\beta_2\in\{.1,.3,.5\}$, $\mu\in\mathcal{S}$   \\
ZO-SGD   & $\alpha=.001$, $\sigma=2.0$.       &   $\alpha\in\mathcal{L}$, $\mu\in\mathcal{S}$.       \\
PSO1 and PSO2   & $c_1=.2$, $c_2=.5,$, $w=.8$.       &   $c_1,c_2,w\in\{.2,.5,.8\}$.     \\
\bottomrule
\end{tabular}
\footnotetext{Total number of generations: 1000. $\bm{m}_1=[-.5,-.5]$. $\bm{m}_2=[.5,.5]$.}
\footnotetext{\textbf{Notation.} $\mathcal{P}_0$ denotes the initial population. For any real random vector $\bm{z}$ and any two real scalar $a<b$, $\bm{z} \sim$ uniform$[a, b]$ denotes that each entry of $\bm{z}$ is sampled uniformly from the interval $[a, b]$.}
\end{table}

\begin{table}[!htb]
\caption{Hyper-parameters for Figure-MNIST Attack. The candidate set for (initial) smoothing parameters is $\mathcal{S}:=\{1.0, .1\}$. The hyper-parameter symbols for each algorithm are the same as their source publications. For example, $t_1$ in ZO-SLGHd and ZO-SLGHr denotes the initial scaling parameter, $\mu$ in ZO-AdaMM is the scaling parameter, and $\alpha$ denotes a constant learning rate. The number $784$ equals the dimensional number $d$. It appears in the candidate set since it is taken from \cite{Iwakiri2022}, who performed similar experiments. The set of candidate values that lead to the highest fitness (averaged over the 10 image attackes) are be selected.}
\label{MNIST-Hyper-params}
\centering%
\begin{tabular}{ p{3cm}  p{4cm}  p{7cm} }
\midrule
 & Selected Values & Candidates $(\bm{\mu}^*)$ \\
\toprule
EPGS & $\alpha =.1$, $N=.02$, $\sigma=.1$.   &  $N\in\{.01,.02,.03\}$, $\alpha_t\in\{.1,.05\}$, $\sigma\in\mathcal{S}$.       \\

STD-Homotopy  & $\alpha=.5$, $\gamma=.5$, $\sigma=1.0$,, $T_{\bm{\mu}}=500$, $\tau=100$, $N_\sigma=10$.   &  $\gamma\in\{.5,.8\}$, $\alpha\in\{.5,.1\}$, $\sigma\in\mathcal{S}$.     \\

ZO-SLGHd   & $\beta=10^{-4}$, $\eta=.001$, $t_1=.1$, $\gamma=.995$      &   $\beta\in\{1/784,10^{-4}, .1\}$, $\eta\in\{.1/784,10^{-3}\}$, $t_1\in\mathcal{S}$, $\gamma\in\{.999,.995\}$.    \\
ZO-SLGHr   & $\beta=10^{-4}$, $t_1=.1$,  $\gamma=.995$.     &   $\beta\in\{10^{-4},1/784,.1 \}$, $\gamma\in\{.999,.995\}$, $t_1\in\mathcal{S}$. \\
ZO-AdaMM   &$\beta_{1}=.9$, $\beta_2=.1$, $\alpha=.1$, $\mu=.1$.        &   $\alpha\in\{100/784,.001,.1\}$, $\beta_1\in\{.5,.9\}$, $\beta_2\in\{.1,.3\}$, $\mu\in\mathcal{S}$   \\
ZO-SGD   & $\alpha=10^{-4}$, $\mu=.1$.      &   $\alpha\in\{10^{-4},.1,1/784\}$, $\mu\in\mathcal{S}$.       \\
\bottomrule
\end{tabular}
\footnotetext{Total number of generations: 1000. $\bm{m}_1=[-.5,-.5]$. $\bm{m}_2=[.5,.5]$.}
\footnotetext{\textbf{Notation.} $\mathcal{P}_0$ denotes the initial population. For any real random vector $\bm{z}$ and any two real scalar $a<b$, $\bm{z} \sim$ uniform$[a, b]$ denotes that each entry of $\bm{z}$ is sampled uniformly from the interval $[a, b]$.}
\end{table}

\begin{table}[!htb]
\caption{Hyper-parameters for CIFAR-10 Attack. The candidate set for (initial) smoothing parameters is $\mathcal{S}:=\{1.0, .1\}$. The symbols are the same as those in Table \ref{MNIST-Hyper-params}. The set of candidate values that lead to the highest fitness (averaged over the 10 image attackes) are be selected.}
\label{Cifar10-Hyper-params}
\centering%
\begin{tabular}{ p{3cm}  p{4cm}  p{7cm} }
\midrule
 & Selected Values & Candidates $(\bm{\mu}^*)$ \\
\toprule
EPGS & $\alpha=.1$, $N=.03$, $\sigma=.1$.   &  $N\in\{.02,.03,.04\}$, $\alpha_t\in\{.1,.05\}$, $\sigma\in\mathcal{S}$.       \\

STD-Homotopy  & $\alpha=.5$, $\gamma=.8$, $\sigma=.1$, $T_{\bm{\mu}}=300$, $\tau=100$, $N_\sigma=10$.   &  $\gamma\in\{.5,.8\}$, $\alpha\in\{.1,.5\}$, $\sigma\in\mathcal{S}$.     \\

ZO-SLGHd   & $\beta=.01/3072$, $\eta=10^{-5}$, $t_1=1.0$, $\gamma=.999$       &   $\beta\in\{.01/3072, .1\}$, $\eta\in\{10^{-4}/784,10^{-5} \}$, $t_1\in\mathcal{S}$, $\gamma\in\{.995,.999\}$.    \\
ZO-SLGHr   & $\beta=.01/3072$, $t_1=.1$,  $\gamma=.995$.      &   $\beta\in\{.01/3072,.1\}$, $\gamma\in\{.999,.995\}$, $t_1\in\mathcal{S}$. \\
ZO-AdaMM   &$\beta_{1}=.9$, $\beta_2=.1$, $\alpha=.1$, $\mu=.1$.       &   $\alpha\in\{.5/3072,.001,.1\}$, $\beta_1\in\{.5,.9\}$, $\beta_2\in\{.1,.3\}$, $\mu\in\mathcal{S}$   \\
ZO-SGD   & $\alpha=.01/3072$, $\sigma=.1$.       &   $\alpha\in\{.01/3072,10^{-4},.05\}$, $\sigma\in\mathcal{S}$.       \\
\bottomrule
\end{tabular}
\footnotetext{Total number of generations: 1000. $\bm{m}_1=[-.5,-.5]$. $\bm{m}_2=[.5,.5]$.}
\footnotetext{\textbf{Notation.} $\mathcal{P}_0$ denotes the initial population. For any real random vector $\bm{z}$ and any two real scalar $a<b$, $\bm{z} \sim$ uniform$[a, b]$ denotes that each entry of $\bm{z}$ is sampled uniformly from the interval $[a, b]$.}
\end{table}

\end{document}